\theoremstyle{plain}
\newtheorem{thm}{Theorem}[section]
\newtheorem{lemma}[thm]{Lemma}
\newtheorem{prop}[thm]{Proposition}
\newtheorem{Question}[thm]{Question}
\theoremstyle{definition}
\newcommand{\Z}{\mathbb{Z}}
\newcommand{\bnum}{\begin{enumerate}}
\newcommand{\enum}{\end{enumerate}}
\numberwithin{equation}{section}
\DeclareMathOperator{\Cent}{Cent}
\begin{document}
\title{\textbf{Characterizing some rings of finite order}}
\author{Jutirekha Dutta, Dhiren K. Basnet and Rajat K. Nath\footnote{Corresponding author}}
\date{}
\maketitle
\begin{center}\small{\it 
Department of Mathematical Sciences, Tezpur University,\\ Napaam-784028, Sonitpur, Assam, India.\\

Emails:\, jutirekhadutta@yahoo.com,  dbasnet@tezu.ernet.in and rajatkantinath@yahoo.com*}
\end{center}

\medskip

\begin{abstract} 
In this paper, we compute the number of distinct centralizers of some classes of finite rings. We then characterize all finite rings with $n$ distinct centralizers for any positive integer $n \leq 5$. Further we give some connections between the number of distinct centralizers of  a finite ring and its commutativity degree. 
\end{abstract}

\medskip

\noindent {\small{\textit{Key words:}  finite ring, centralizer, commutativity degree.}}  
 
\noindent {\small{\textit{2010 Mathematics Subject Classification:} 
16U70.}} 

\medskip

\section{Introduction}
Finite abelian groups have been completely characterized up to isomorphism for a long time but finite rings have yet to be characterized. The problem of characterizing finite rings up to isomorphism  has received considerable attention in recent years (see \cite{bBhK14, Cu10, dOP94, Fin93, gC95}) starting from the works of  Eldridge \cite{El68} and   Raghavendran \cite{Ra70}. In this paper we characterize finite rings in terms of their number of distinct centralizers. Given a ring $R$ and an element $r \in R$, the subrings $C(r)   =   \{s \in R \colon rs \; = \; sr\}$ and $Z(R) = \{ s \in R \;:\; rs = sr \text{ for all } r \in R\}$  are known as \textit{centralizer} of $r$ in $R$ and \textit{center} of $R$ respectively. We write $\Cent(R)$ to denote the set of all  centralizers in $R$. Firstly we compute the order of  $\Cent (R)$ for  some classes of finite rings  $R$.  Motivated by the works of Belcastro and Sherman \cite{bG94} and   Ashrafi \cite{ashrafi00}, we define   $n$-centralizer ring for any positive integer $n$. A ring $R$ is said to be    \textit{$n$-centralizer} ring if $|\Cent (R)| = n$, for any positive integer $n$.  We then characterize  $n$-centralizer finite rings for all  $n \leq 5$, adapting similar techniques  that are used by Belcastro and Sherman \cite{bG94} in order to characterize $n$-centralizer finite groups for $n \leq 5$.  


Further, we conclude the paper by noting some interesting connections between $d(R)$ and $|\Cent (R)|$. Note that for any finite ring $R$, the ratio $d(R) = \frac{1}{|R|^2}\underset{r \in R}{\sum}|C(r)|$ is the probability that a randomly chosen pair of elements of $R$ commute.  This ratio is known as \textit{commutativity degree} of  finite ring $R$ and it was introduced by MacHale \cite{dmachale} in the year 1975. Some characterizations of finite rings in terms of commutativity degree can be found in \cite{dmachale, BMS, BM}.     

  Throughout the paper $R$ denotes a finite ring. For any subring $S$ of $R$,   $R/S$  denotes the additive quotient group  and $|R : S|$ denotes the index of  the additive subgroup $S$ in the additive group $R$. Note that the isomorphisms considered are the additive group isomorphisms. Also for any two non-empty subsets $A$ and $B$ of a ring $R$, we write $A + B = \{a + b : a \in A, b \in B\}$. We shall use the fact that  for any non-commutative ring $R$, the additive group $\frac{R}{Z(R)}$ is not a cyclic group (see \cite[Lemma 1]{dmachale}).

\section{Some computations of $|\Cent(R)|$}
In this section, we compute $|\Cent(R)|$ for some classes of finite rings. However,  first we prove some   results which are   useful for subsequent results as well as for the next sections.

\begin{prop}\label{prop1}
  $R$ is a commutative ring if and only if  $R$ is a $1$-centralizer ring.
\end{prop}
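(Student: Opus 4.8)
The plan is to prove both implications directly from the definition $C(r) = \{s \in R : rs = sr\}$, and the whole argument will hinge on the elementary observation that $r \in C(r)$ for every $r \in R$, since $r$ commutes with itself. Keeping this fact in mind, each direction reduces to a short set-theoretic manipulation of the centralizers.

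For the forward implication, I would assume $R$ is commutative. Then for every $r \in R$ and every $s \in R$ we have $rs = sr$, so $s \in C(r)$; this shows $C(r) = R$ for each $r$. Consequently every centralizer equals the single subring $R$, so $\Cent(R) = \{R\}$ and $|\Cent(R)| = 1$, which is exactly the statement that $R$ is a $1$-centralizer ring.

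For the reverse implication, I would assume $|\Cent(R)| = 1$, meaning there is a single common subring $C$ with $C(r) = C$ for all $r \in R$. Here I would use the key fact above: since $r \in C(r) = C$ for every $r$, the subring $C$ contains every element of $R$, and therefore $C = R$. Hence $C(r) = R$ for all $r$, which says that for every pair $r, s \in R$ we have $s \in C(r)$, i.e. $rs = sr$; thus $R$ is commutative.

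I do not expect any genuine obstacle here, as the proposition is foundational. The only point requiring a moment's care is the reverse direction, where one must explicitly invoke $r \in C(r)$ to conclude that the unique centralizer is the whole ring rather than merely some proper subring; without this observation the hypothesis $|\Cent(R)|=1$ would not by itself pin down which subring the common centralizer is.
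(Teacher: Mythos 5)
Your proof is correct and follows essentially the same route as the paper, which compresses the whole argument into the single observation that $R$ is commutative if and only if $C(r) = R$ for every $r \in R$. Your explicit use of $r \in C(r)$ to identify the unique centralizer with $R$ simply fills in the detail the paper leaves implicit (one could equally note $C(0_R) = R$), so there is no substantive difference.
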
 
\begin{proof}
The proposition follows from the fact that a ring $R$ is  commutative if and only if $C(r) = R$ for each $r \in R$. 
\end{proof}

\begin{prop}\label{prop2}
Let $R, S$ be two rings, then 
\[ 
\Cent(R \times S) = \Cent(R) \times \Cent(S).
\]
\end{prop}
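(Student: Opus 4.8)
The plan is to reduce the whole statement to a single coordinatewise computation, since the definition of $\Cent$ is built entirely out of the individual centralizers $C(r)$. First I would pin down the meaning of the right-hand side: here $\Cent(R) \times \Cent(S)$ should be read as the family of all Cartesian products $A \times B$ where $A \in \Cent(R)$ and $B \in \Cent(S)$, so that the claimed identity is an equality of two families of subsets of $R \times S$, not a literal Cartesian product of two sets of subsets. With this reading fixed, the proof splits into the two inclusions.

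The key step, from which everything else follows formally, is the formula $C((r,s)) = C(r) \times C(s)$ for every element $(r,s) \in R \times S$. This comes straight from the componentwise multiplication in the product ring: a pair $(a,b)$ lies in $C((r,s))$ exactly when $(r,s)(a,b) = (a,b)(r,s)$, and comparing the two coordinates shows this is equivalent to the simultaneous conditions $ra = ar$ and $sb = bs$, i.e. to $a \in C(r)$ together with $b \in C(s)$. Hence $C((r,s)) = C(r) \times C(s)$.

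The forward inclusion $\Cent(R \times S) \subseteq \Cent(R) \times \Cent(S)$ is then immediate, because every centralizer of the product ring has the form $C((r,s)) = C(r) \times C(s)$ with $C(r) \in \Cent(R)$ and $C(s) \in \Cent(S)$. For the reverse inclusion I would run the same formula backwards: given arbitrary $A = C(r) \in \Cent(R)$ and $B = C(s) \in \Cent(S)$, the product $A \times B = C(r) \times C(s) = C((r,s))$ is a centralizer of an element of $R \times S$, hence belongs to $\Cent(R \times S)$. Combining the two inclusions yields the asserted equality.

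I do not anticipate a genuine obstacle here, as the argument rests entirely on the one coordinatewise identity and the two inclusions are purely formal afterwards. The only point requiring mild care is the set-theoretic bookkeeping: one must remember that distinct elements $(r,s)$ may produce the same centralizer, so that the correspondence $C((r,s)) \leftrightarrow (C(r),C(s))$ need not be injective, but this causes no difficulty because we are comparing the \emph{ranges} of these assignments as families of subsets, and the product formula already matches those ranges exactly.
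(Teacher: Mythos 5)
Your proposal is correct and follows exactly the paper's approach: the paper's entire proof is the observation that $C((r,s)) = C(r) \times C(s)$, which is your key step, and the two inclusions you spell out are the "this proves the proposition" that the paper leaves implicit. Your explicit remark about reading $\Cent(R) \times \Cent(S)$ as the family of products $A \times B$ is a sensible clarification of the same argument, not a different route.
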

\begin{proof}
It can be easily seen that $C((r, s)) = C(r) \times C(s)$ for any $r \in R$ and $s \in S$. This proves the proposition.
\end{proof}

The following lemmas play an important role in finding lower bound of $| \Cent(R)|$ for any non-commutative ring $R$. 

\begin{lemma}\label{lemma1}
Let $R$ be a ring. Then $Z(R)$ is the intersection of all centralizers in $R$.
\end{lemma}
\begin{proof}
It is clear that $Z(R) \; \subseteq \; \underset{r \in R}{\cap} C(r)$. Now, for any $s \in \underset{r \in R}{\cap} C(r)$ we have $rs = sr$ for all $r \in R$. Therefore $s \in Z(R)$. Hence the lemma follows.  
\end{proof}

\begin{lemma}\label{lemma2}
If $R$ is a ring, then $R$ is the union of centralizers of all non-central elements of $R$.
\end{lemma}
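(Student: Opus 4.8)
The plan is to prove the asserted set equality by establishing the two inclusions separately, under the standing assumption (made explicit in the sentence preceding the lemma) that $R$ is non-commutative; if $R$ were commutative we would have $Z(R) = R$ and there would be no non-central elements at all, so the indexing set would be empty and the statement vacuously false. Writing $N = R \setminus Z(R)$ for the set of non-central elements and $U = \bigcup_{r \in N} C(r)$ for the union in question, the inclusion $U \subseteq R$ is immediate, since by definition each centralizer $C(r)$ is a subset of $R$. The substance of the lemma is therefore the reverse inclusion $R \subseteq U$.

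For the reverse inclusion I would fix an arbitrary $x \in R$ and argue by cases according to whether $x$ is central. In the first case, if $x \notin Z(R)$, then $x$ is itself a non-central element and $x \in C(x)$ trivially, so $x \in U$ with no further work. The only case that needs a short argument is the central one: if $x \in Z(R)$, then by definition of the center $x$ commutes with every element of $R$, so $x \in C(r)$ for \emph{every} $r \in R$, and it remains merely to produce a single non-central $r$ in order to place $x$ inside the union $U$.

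This is the one place where the non-commutativity hypothesis enters, and it is the only step that is not purely formal: because $R$ is non-commutative we have $Z(R) \neq R$, so $N$ is non-empty and we may select some $r \in N$; then $x \in C(r)$ with $r$ non-central, whence $x \in U$. Combining the two cases gives $R \subseteq U$, and together with the trivial inclusion $U \subseteq R$ we conclude $R = U$, as claimed. I do not expect any genuine obstacle, as the lemma is elementary; the single point requiring care is simply to keep the non-commutativity assumption in force so that the indexing set $N$ is non-empty and the union is not vacuous.
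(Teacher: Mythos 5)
Your proof is correct and takes essentially the same route as the paper: both establish the trivial inclusion $\bigcup_{r \in R - Z(R)} C(r) \subseteq R$ and then prove the reverse inclusion by cases, placing a non-central $x$ in the union via $x \in C(x)$ and a central $x$ in the union via the fact that central elements lie in every centralizer (the paper cites Lemma~\ref{lemma1} for this; you use the definition of the center directly, which is the same thing). The one point of difference is to your credit: you explicitly note that non-commutativity of $R$ is needed so that the indexing set $R - Z(R)$ is non-empty, a hypothesis the paper's proof uses silently in the central case.
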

\begin{proof}
It is clear that $\underset{r  \in R - Z(R)}{\cup} C(r)  \subseteq  R$. Again, for any $s \in Z(R)$, we have by Lemma \ref{lemma1}, $s \in C(r)$ for all $r \in R$. So $s \in \underset{r  \in R - Z(R)}{\cup} C(r)$. Also for any $s \in R-Z(R)$, we have $s \in C(s)$ and so $s \in \underset{r  \in R - Z(R)}{\cup} C(r)$. Hence the lemma follows.  
\end{proof}

\begin{lemma}\label{lemma3}
A ring $R$ cannot be written as a union of two of its proper subrings. 
\end{lemma}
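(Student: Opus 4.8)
The plan is to prove the contrapositive-style statement by assuming that $R = A \cup B$ for two proper subrings $A$ and $B$, and deriving a contradiction. Since $A$ and $B$ are proper, there must exist an element $a \in A \setminus B$ and an element $b \in B \setminus A$; the existence of both is exactly what properness together with the covering hypothesis forces, and this is the crux of the whole argument. I would first verify that such $a$ and $b$ exist: if, say, every element of $A$ already lay in $B$, then $A \subseteq B$ would give $R = A \cup B = B$, contradicting that $B$ is proper, and symmetrically for the other inclusion.

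Next I would examine the sum $a + b$. Because $R$ is closed under addition and $R = A \cup B$, the element $a + b$ must lie in $A$ or in $B$. The key step is to rule out both possibilities. If $a + b \in A$, then since $a \in A$ and $A$ is a subring (hence closed under subtraction), we get $b = (a+b) - a \in A$, contradicting the choice $b \notin A$. Symmetrically, if $a + b \in B$, then $a = (a+b) - b \in B$, contradicting $a \notin B$. Either way we reach a contradiction, so no such covering by two proper subrings can exist.

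The main obstacle, and the only place requiring genuine care, is guaranteeing that both $A \setminus B$ and $B \setminus A$ are nonempty; once that is secured the additive closure argument is entirely routine. I would emphasize that the argument uses only the additive group structure of $R$ together with closure under subtraction within each subring, so it is really the abelian-group fact that a group is never the union of two proper subgroups, specialized to the additive groups here. No multiplicative structure is needed, which keeps the proof short and transparent.
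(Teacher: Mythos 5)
Your proof is correct and rests on exactly the same idea as the paper's: the paper disposes of this lemma in one line by citing the well-known fact that a group cannot be the union of two proper subgroups (applied to the additive group of $R$), and your argument is precisely the standard proof of that fact, carried out with the additive structure of the subrings. The only difference is that your version is self-contained where the paper's is by citation; the mathematics is identical.
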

\begin{proof}
The lemma follows from the well-known fact that a group can not be written as a union of two of its proper subgroups.
\end{proof}

\begin{thm}\label{theorem1}
For any non-commutative ring $R$, $|\Cent(R)| \geq 4$.
\end{thm}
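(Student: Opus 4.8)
The plan is to argue by elimination, showing that a non-commutative ring cannot have exactly $1$, $2$, or $3$ centralizers. First I would record two standing observations. Since $0 \in Z(R)$ and $C(0) = R$, the whole ring $R$ is always a member of $\Cent(R)$. On the other hand, if $R$ is non-commutative then $Z(R) \neq R$, so there is a non-central element $r$, and for such an $r$ we have $C(r) \neq R$ (otherwise $r$ would commute with everything and lie in $Z(R)$). Thus $R$ and $C(r)$ are two distinct centralizers, and together with Proposition \ref{prop1} this already gives $|\Cent(R)| \geq 2$. It remains to rule out the values $2$ and $3$.

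Next I would eliminate the case $|\Cent(R)| = 2$. Here $\Cent(R) = \{R, C(r)\}$ for a single proper centralizer $C(r)$. The key point is that the centralizer of any non-central element is proper, hence must equal $C(r)$; so every non-central element has centralizer exactly $C(r)$. Applying Lemma \ref{lemma2}, which expresses $R$ as the union $\underset{s \in R - Z(R)}{\cup} C(s)$ of centralizers of its non-central elements, this union collapses to $C(r)$, forcing $R = C(r)$ and contradicting $C(r) \neq R$. Hence $|\Cent(R)| \neq 2$.

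Finally, for $|\Cent(R)| = 3$, write $\Cent(R) = \{R, C(a), C(b)\}$ with $C(a), C(b)$ the two distinct proper centralizers. Since the centralizer of every non-central element is proper, it is either $C(a)$ or $C(b)$, so Lemma \ref{lemma2} yields $R = C(a) \cup C(b)$, exhibiting $R$ as a union of two proper subrings. This is exactly what Lemma \ref{lemma3} forbids, giving a contradiction and hence $|\Cent(R)| \neq 3$. Combining the three exclusions with the trivial lower bound gives $|\Cent(R)| \geq 4$. I expect the $|\Cent(R)| = 3$ step to be the crux of the argument, as it is the one that genuinely uses the structural fact (Lemma \ref{lemma3}) that a ring is not a union of two proper subrings; the remaining cases are comparatively routine consequences of Lemma \ref{lemma2} and the observation that centralizers of non-central elements are proper.
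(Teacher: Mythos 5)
Your proposal is correct and follows essentially the same route as the paper: establish $|\Cent(R)|\geq 2$ from non-commutativity, rule out $|\Cent(R)|=2$ via Lemma \ref{lemma2}, and rule out $|\Cent(R)|=3$ via Lemma \ref{lemma3}. Your write-up merely makes explicit the steps the paper leaves implicit (that centralizers of non-central elements are proper, and that Lemma \ref{lemma2} is also what reduces the three-centralizer case to a union of two proper subrings).
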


\begin{proof}
Since $R$ is non-commutative, so $|\Cent(R)| \geq 2$. If $|\Cent(R)| = 2$, then, by Lemma \ref{lemma2}, $R$ is equal to a proper subset of itself, which is not possible. Also by Lemma \ref{lemma3}, $|\Cent(R)| \neq 3$. Hence the theorem follows. 
\end{proof}
\noindent Note that the ring
$R=\left\lbrace  \begin{bmatrix}
    0 & 0\\
    0 & 0\\
  \end{bmatrix},
 \begin{bmatrix}
    1 & 0\\
    1 & 0\\
  \end{bmatrix},
   \begin{bmatrix}
    0 & 1\\
    0 & 1\\
  \end{bmatrix},
   \begin{bmatrix}
    1 & 1\\
    1 & 1\\
  \end{bmatrix} \right\rbrace$, where $0, 1 \in \Z_2$, has $4$ distinct centralizers. So the above result is the best one possible.

At this point, the following  question, similar to the question posed by Belcastro and Sherman \cite[p. 371]{bG94},  arises naturally.
\begin{Question}
Does there exist an  $n$-centralizer ring for any positive integer $n \ne 2, 3$? Can we characterize an $n$-centralizer ring?
\end{Question}

 The following results show the existence of $n$-centralizer rings for some values of $n$.

\begin{prop}
There exists a $(p + 2)$-centralizer ring for any prime $p$.
\end{prop}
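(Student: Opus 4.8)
The plan is to exhibit one concrete ring and compute its set of centralizers explicitly. A natural candidate is the ring of strictly upper-triangular $3\times 3$ matrices over $\Z_p$,
\[
R = \left\{ \begin{bmatrix} 0 & a & b \\ 0 & 0 & c \\ 0 & 0 & 0 \end{bmatrix} : a,b,c \in \Z_p \right\},
\]
whose additive group has order $p^3$. First I would record the multiplication rule: the product of two such matrices keeps only its $(1,3)$-entry, which equals $ac'$, where $a$ is the $(1,2)$-entry of the first factor and $c'$ the $(2,3)$-entry of the second. Consequently two elements commute precisely when $ac' = a'c$, so all the centralizer information is encoded in the two coordinates $a$ and $c$.

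Second, I would identify the center. Requiring $ac' = a'c$ for every choice of $(a',c')$ forces $a = c = 0$, so $Z(R)$ consists exactly of the matrices whose only possibly nonzero entry is the $(1,3)$-entry. Hence $|Z(R)| = p$, the ring is non-commutative, and $R/Z(R) \cong \Z_p \times \Z_p$ as additive groups; this mirrors the group-theoretic situation that produces $p+2$ centralizers.

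Third, and this is the heart of the argument, I would compute $C(X)$ for a non-central $X$ with $(1,2)$-entry $a$ and $(2,3)$-entry $c$, so that $(a,c) \neq (0,0)$. The defining condition $ac' - ca' = 0$ is a single nontrivial linear equation in $(a',c')$, so $C(X)$ is an additive subgroup of index $p$, of order $p^2$, containing $Z(R)$. Crucially, $C(X)$ depends only on the line $\{(a',c') : ac' = ca'\}$, equivalently on the projective point $[a:c] \in \mathbb{P}^1(\Z_p)$. Since $\mathbb{P}^1(\Z_p)$ has exactly $p+1$ points and each arises from some non-central $X$, this yields precisely $p+1$ distinct proper centralizers.

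Finally, adjoining the full ring $R$, which equals $C(X)$ for every central $X$, to these $p+1$ proper centralizers gives $|\Cent(R)| = p+2$. The step requiring the most care is the third one: I must check that the assignment $X \mapsto C(X)$ genuinely factors through $[a:c]$ and that the resulting $p+1$ subgroups are pairwise distinct and distinct from $R$. Distinctness from $R$ is immediate from the index count $p^2 < p^3$, and pairwise distinctness follows because distinct directions $[a:c]$ determine distinct lines in the $(a',c')$-plane. Everything else—that $R$ is an associative ring and that the stated orders are correct—is routine verification.
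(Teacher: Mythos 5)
Your proof is correct: the commuting criterion $ac' = a'c$, the computation $|Z(R)| = p$, and the parametrization of the proper centralizers by the $p+1$ lines $\{(a',c') \in \Z_p^2 : ac' = ca'\}$ all check out, and the distinctness arguments are handled properly. However, your witness ring differs from the paper's. The paper takes the ring $\left\{\begin{bmatrix} a & b\\ 0 & 0 \end{bmatrix} : a,b \in \Z_p\right\}$ of order $p^2$, where the commuting condition $ay = xb$ is again a vanishing $2\times 2$ determinant; there the center is trivial, and the $p+1$ proper centralizers are exactly the lines through the origin in $\Z_p^2$, namely $C\left(\begin{bmatrix} a & 0\\ 0 & 0 \end{bmatrix}\right)$ and $C\left(\begin{bmatrix} la & a\\ 0 & 0 \end{bmatrix}\right)$ for $l \in \Z_p$, listed in affine rather than projective language. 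So the combinatorial core of the two arguments is identical --- centralizers of non-central elements correspond bijectively to the $p+1$ directions in a two-dimensional $\Z_p$-space --- and only the carrier ring differs. What each choice buys: the paper's example is as small as possible (order $p^2$; indeed by Theorem \ref{p^2} \emph{every} non-commutative ring of that order is $(p+2)$-centralizer), while your strictly upper-triangular ring of order $p^3$ has non-trivial center with $R/Z(R) \cong \Z_p \times \Z_p$ visible at a glance, so your count is also an immediate instance of Theorem \ref{dc} and needs no hand computation at all once that theorem is available; the price is working one dimension higher than necessary.
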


\begin{proof}
We consider the ring $R=\left\lbrace  \begin{bmatrix}
    a & b\\
    0 & 0\\
  \end{bmatrix} \;:\; a, b \in \Z_p \right\rbrace$.
For any element $ \begin{bmatrix}
    x & y\\
    0 & 0\\
  \end{bmatrix}$ of $C\left( \begin{bmatrix}
    a & b\\
    0 & 0\\
  \end{bmatrix}\right)$ we have $xb - ay = 0$.
  
Clearly, $C\left( \begin{bmatrix}
    0 & 0\\
    0 & 0\\
  \end{bmatrix}\right)  = R$. Using simple calculations, we have for any $a \neq 0$ and $l \in \mathbb{Z}_p$,
     
  $C\left( \begin{bmatrix}
    a & 0\\
    0 & 0\\
  \end{bmatrix}\right) = \left\lbrace  \begin{bmatrix}
    x & 0\\
    0 & 0\\
  \end{bmatrix} \;:\; x \in \Z_p \right\rbrace$ and
  
    $C\left( \begin{bmatrix}
    la & a\\
    0 & 0\\
  \end{bmatrix}\right) = \left\lbrace  \begin{bmatrix}
    lx & x\\
    0 & 0\\
  \end{bmatrix} \;:\;  x \in \Z_p \right\rbrace$.  
Hence $|\Cent(R)| = p + 2$.
\end{proof}

The above proposition is a particular case of the following theorem.
\begin{thm}\label{p^2}
Let $R$ be a non-commutative ring of order $p^2$, where $p$ is a prime. Then $|\Cent(R)| = p + 2$.
\end{thm}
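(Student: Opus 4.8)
The plan is to first pin down the additive structure of $R$ and its centre, and then count centralizers by recognizing them as additive subgroups. The additive group of $R$ is abelian of order $p^2$, hence isomorphic either to $\Z_{p^2}$ or to $\Z_p \times \Z_p$. I would first rule out the cyclic case: if the additive group were cyclic with generator $g$, then every element of $R$ would be an additive multiple of $g$, and for $r = mg$ and $s = ng$ one computes $rs = mn(g \cdot g) = sr$, forcing $R$ to be commutative. Since $R$ is non-commutative, its additive group must be $\Z_p \times \Z_p$, which I regard as a two-dimensional vector space over the field $\Z_p$; in particular every nonzero element has additive order $p$.

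Next I would show $Z(R) = \{0\}$. As an additive subgroup of a group of order $p^2$, we have $|Z(R)| \in \{1, p, p^2\}$. The value $p^2$ is excluded since $R$ is non-commutative, and the value $p$ is excluded because it would make $R/Z(R)$ a group of order $p$, hence cyclic, contradicting the fact quoted in the Introduction that $\frac{R}{Z(R)}$ is never cyclic for non-commutative $R$. Thus $Z(R) = \{0\}$, and consequently $C(r) \neq R$ for every nonzero $r$, since by definition $C(r) = R$ is equivalent to $r \in Z(R)$.

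Now I would analyze $C(r)$ for $r \neq 0$. Each $C(r)$ is an additive subgroup containing the order-$p$ subgroup $\langle r \rangle$ but not all of $R$; since subgroup orders lie in $\{1, p, p^2\}$, this forces $|C(r)| = p$ and therefore $C(r) = \langle r \rangle$. Together with $C\left(0\right) = R$, the distinct centralizers are exactly $R$ together with the subgroups $\langle r \rangle$ arising from nonzero $r$. These subgroups $\langle r \rangle$ are precisely the one-dimensional subspaces (lines through the origin) of $\Z_p \times \Z_p$, of which there are $\frac{p^2 - 1}{p - 1} = p + 1$, and each one is realized as $C(r)$ by choosing any nonzero $r$ lying on it. Hence $|\Cent(R)| = 1 + (p+1) = p + 2$.

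I expect the main obstacle to be the first step, namely justifying that the additive group cannot be cyclic, since this is exactly where finite non-commutative rings depart from the group-theoretic picture used in \cite{bG94} (there being no non-abelian groups of order $p^2$). Once the elementary abelian structure $\Z_p \times \Z_p$ is established and $Z(R) = \{0\}$ is known, the remaining steps reduce to the standard count of lines through the origin in $\Z_p \times \Z_p$ and are essentially bookkeeping.
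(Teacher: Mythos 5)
Your proof is correct, but it reaches the key facts by a partly different route than the paper, and one of your steps turns out to be avoidable. The shared core is forcing $|C(r)| = p$ for every nonzero $r$ by squeezing the additive subgroup $C(r)$ strictly between $\{0\}$ and $R$. From there the paper is more economical: since $Z(R)$ is a subgroup of the prime-order group $C(x)$ that misses $x$, it concludes $Z(R) = \{0_R\}$ at once (no appeal to MacHale's lemma), and then counts the proper centralizers by a partition argument --- any two distinct centralizers of non-central elements meet only in $0_R$, so their number is $\frac{p^2-1}{p-1} = p+1$. You instead first classify the additive group: you rule out the cyclic case by the biadditivity computation $(mg)(ng) = mn(g \cdot g)$ (correct, and a nice observation), conclude $(R,+) \cong \Z_p \times \Z_p$, obtain $Z(R) = \{0\}$ from the quoted fact that $R/Z(R)$ is never cyclic for non-commutative $R$, and then identify the centralizers with the $p+1$ lines through the origin. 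Both routes are valid and the two counts are the same number in different packaging; your version buys an explicit geometric picture plus the structural fact that $(R,+)$ is elementary abelian, while the paper's argument shows that the obstacle you single out --- excluding $\Z_{p^2}$ --- never needs to be confronted: once $|C(x)| = p$ for all nonzero $x$, every nonzero element has additive order $p$, so elementary abelianness of $(R,+)$ comes for free as a byproduct rather than a prerequisite.
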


\begin{proof}
For any $x \in R - Z(R)$, we consider $C(x)$. As $C(x)$ is an additive subgroup of $R$ we have $|C(x)| = 1, p \;\text{or}\; p^2$. Clearly, $|C(x)| \neq 1, p^2$, as $x, 0_R \in C(x)$ and $R$ is non-commutative, where $0_R$ is the additive identity in $R$. Hence $C(x)$ is additive cyclic group of order $p$ and so $Z(R) = \{0_R\}$. 

Let $x, y \in R - Z(R)$. If  there exists an element  $t (\neq 0_R) \in C(x) \cap C(y)$ then $C(x) = C(y)$, as $C(x), C(y)$ are additive cyclic groups of order $p$. Thus for any  $x, y \in R - Z(R)$ we have either $C(x) \cap C(y) = \{0_R\}$ or $C(x) = C(y)$. Therefore the number of centralizers of non-central elements is $\dfrac{|R| - |Z(R)|}{p - 1} = \dfrac{p^2 - 1}{p - 1} = p + 1$. Hence $|\Cent(R)| = p + 2$.  
\end{proof}

\begin{thm}
Let $p$ be a prime number and $R$ be a non-commutative ring of order $p^3$ with unity. Then $|\Cent(R)| = p + 2$. 
\end{thm}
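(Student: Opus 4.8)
The plan is to first pin down the center $Z(R)$ using the unity, then show that every centralizer of a non-central element has order $p^2$ and is commutative, and finally count by a partition argument like the one in the proof of Theorem \ref{p^2}.

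First I would locate $Z(R)$. Since $R$ has unity, the prime subring $P = \{k\cdot 1_R : k \in \Z\}$ lies in $Z(R)$ and is isomorphic to $\Z_{p^c}$, where $p^c = \mathrm{char}(R) \in \{p, p^2, p^3\}$. If $\mathrm{char}(R) = p^3$ then $P = R$ is additively cyclic and $R$ is commutative, a contradiction. If $\mathrm{char}(R) = p^2$ then $|Z(R)| \ge |P| = p^2$, forcing $|Z(R)| = p^2$ (as $Z(R) \ne R$), so $R/Z(R)$ is cyclic of order $p$, contradicting the non-cyclicity of the central quotient of a non-commutative ring. Hence $\mathrm{char}(R) = p$, so $P \cong \Z_p \subseteq Z(R)$; the same cyclicity obstruction rules out $|Z(R)| = p^2$, and therefore $Z(R) = P$ has order $p$.

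Next, for $x \in R - Z(R)$ the additive subgroup $C(x)$ satisfies $Z(R) \subsetneq C(x) \subsetneq R$, since it contains both $Z(R)$ and $x \notin Z(R)$ while $x$ non-central gives $C(x) \ne R$; by Lagrange $|C(x)| = p^2$. I would then show each such $C(x)$ is commutative. As $Z(R) \subseteq Z(C(x))$ and $|Z(R)| = p$, if $C(x)$ were non-commutative its center would be a proper subgroup of order $p$, making $C(x)/Z(C(x))$ cyclic of order $p$ --- impossible by the non-cyclicity of the central quotient of a non-commutative ring. Thus every $C(x)$ with $x$ non-central is a commutative subring of order $p^2$ containing $Z(R)$.

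Finally I would run the count. Commutativity yields the dichotomy: if $x, y \in R - Z(R)$ and some non-central $t$ lies in $C(x) \cap C(y)$, then $t \in C(x)$ together with commutativity of $C(x)$ gives $C(x) \subseteq C(t)$, hence $C(x) = C(t)$ by equality of orders, and likewise $C(y) = C(t)$, so $C(x) = C(y)$. Therefore distinct centralizers of non-central elements meet exactly in $Z(R)$, and each contributes precisely $|C(x)| - |Z(R)| = p^2 - p$ non-central elements. The number of such centralizers is thus $\frac{|R| - |Z(R)|}{p^2 - p} = \frac{p^3 - p}{p^2 - p} = p + 1$, and adding the full centralizer $R = C(0_R)$ gives $|\Cent(R)| = p + 2$. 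The step I expect to be the main obstacle is establishing $|Z(R)| = p$ together with the commutativity of each $C(x)$: both rely on combining the unity with the non-cyclicity of the central quotient, and the whole argument would break without the unity hypothesis --- which is exactly what separates this theorem from the general non-commutative ring of order $p^3$. Once these structural facts are secured, the counting is essentially that of Theorem \ref{p^2} and is routine.
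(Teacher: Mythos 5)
Your proposal is correct, and its skeleton matches the paper's own proof: establish $|Z(R)| = p$, show that every centralizer of a non-central element is a commutative additive subgroup of order $p^2$, deduce that two such centralizers either coincide or intersect exactly in $Z(R)$, and then count $\frac{p^3 - p}{p^2 - p} = p + 1$ proper centralizers plus $R$ itself. Where you differ is in how the two structural facts are justified, and the differences are worth noting. For $|Z(R)| = p$, the paper is more direct: unity gives $0_R, 1_R \in Z(R)$, so $|Z(R)| \geq 2$; since $Z(R)$ is a proper additive subgroup of $C(x)$ for non-central $x$ and $|C(x)| = p^2$ is forced, $|Z(R)| = p$ follows immediately. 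Your detour through the prime subring and the characteristic reaches the same conclusion (with $\mathrm{char}(R) = p$ as a by-product) at the cost of a few extra cases. For the commutativity of each $C(x)$ --- the crux of the argument --- the paper invokes Lemma 3 of Omidi and Vatandoost, an external reference, whereas you derive it from MacHale's non-cyclicity lemma that the paper already quotes in its introduction: a non-commutative $C(x)$ would have center of order exactly $p$ (it contains $Z(R)$ and is proper in $C(x)$), so $C(x)/Z(C(x))$ would be cyclic of order $p$, which is impossible. This substitution is legitimate (the non-cyclicity lemma requires no unity, and in any case $1_R \in C(x)$), and it buys a self-contained proof at the price of a couple of extra lines; the paper's citation buys brevity. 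The final counting step is identical to the paper's, and indeed to its proof of Theorem \ref{p^2}.
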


\begin{proof}
Let $x$ be an arbitrary element of $R - Z(R)$. Then $C(x)$ is an additive subgroup of $R$ and so $|C(x)| = 1, p, p^2 \;\text{or}\; p^3$. Here $|C(x)| \neq 1, p^3$ as $x, 0_R \in C(x)$, where $0_R$ is the additive identity in $R$ and $R$ is non-commutative. If $|C(x)| = p$ then $|Z(R)| = 1$, which is not possible as $0_R, 1_R \in Z(R)$. So $|C(x)| = p^2$ and this gives $|Z(R)| = p$. 

Now, we suppose that $y \in R - Z(R)$ and $y \in C(x)$. Let $z \in C(x)$ be an arbitrary element. We know that $Z(R) \subset Z(C(x))$ and so $|Z(C(x))| > 1$, therefore by Lemma $3$ of \cite{o&v}, $C(x)$ is commutative. Therefore $z \in C(y)$, as $y \in C(x)$. So $C(x) \subseteq C(y)$. Also $|C(x)| = |C(y)|$. Hence, $C(x) = C(y)$; and if $y \notin C(x)$ then $C(x) \cap C(y) = Z(R)$. Therefore the number of centralizers of non-central elements of $R$ is
$\dfrac{|R| - |Z(R)|}{|C(x)| - |Z(R)|} = \dfrac{p^3 - p}{p^2 - p} = p + 1$. Thus $|\Cent(R)| = p + 2$.     
\end{proof}

As an application of the above theorem, it follows that the ring $R=\left\lbrace  \begin{bmatrix}
    a & b\\
    0 & c\\
  \end{bmatrix} \;|\; a, b, c \in \Z_p \right\rbrace$ having order $p^3$ is a  $(p + 2)$-centralizer ring. The following theorem, which is generalization of Theorem \ref{p^2}, gives another class of  $(p + 2)$-centralizer  rings .

\begin{thm}\label{dc}
Let $R$ be a ring and $\frac{R}{Z(R)} \cong \Z_p \times \Z_p$, where $p$ is a prime. Then $|\Cent(R)| = p + 2$.
\end{thm}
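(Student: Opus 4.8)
The plan is to mimic the structure of Theorem \ref{p^2}, since the hypothesis $\frac{R}{Z(R)} \cong \Z_p \times \Z_p$ is precisely what governs the centralizer structure. The key observation is that for any $x \in R - Z(R)$, the centralizer $C(x)$ is an additive subgroup of $R$ containing $Z(R)$ properly (since $x \in C(x)$ but $x \notin Z(R)$), and $C(x) \neq R$ (since $x$ is non-central). Pulling back through the quotient map, $C(x)/Z(R)$ is a nontrivial proper subgroup of $\frac{R}{Z(R)} \cong \Z_p \times \Z_p$, hence has order exactly $p$. So every centralizer of a non-central element satisfies $|C(x)| = p|Z(R)|$, and correspondingly $C(x)/Z(R)$ is one of the $p+1$ subgroups of order $p$ in $\Z_p \times \Z_p$.

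First I would establish that distinct centralizers of non-central elements intersect exactly in $Z(R)$. Suppose $x, y \in R - Z(R)$. If $C(x) \cap C(y) \supsetneq Z(R)$, then $(C(x) \cap C(y))/Z(R)$ is a nontrivial subgroup of both $C(x)/Z(R)$ and $C(y)/Z(R)$; since each of the latter has order $p$ (hence is simple as a group), this forces $C(x)/Z(R) = C(y)/Z(R)$ and therefore $C(x) = C(y)$. Thus for non-central $x, y$ we have the dichotomy: either $C(x) = C(y)$ or $C(x) \cap C(y) = Z(R)$. This mirrors the argument in Theorem \ref{p^2} but with $Z(R)$ playing the role that $\{0_R\}$ played there.

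Next I would count. By Lemma \ref{lemma2}, $R$ is the union of centralizers of its non-central elements, and by Lemma \ref{lemma1}, $Z(R)$ is their common intersection. The non-central elements are partitioned according to which centralizer they lie in: each distinct centralizer $C(x)$ of a non-central element contributes $|C(x)| - |Z(R)| = p|Z(R)| - |Z(R)| = (p-1)|Z(R)|$ non-central elements. Since $|R - Z(R)| = |R| - |Z(R)| = p^2|Z(R)| - |Z(R)| = (p^2 - 1)|Z(R)|$, the number of distinct centralizers of non-central elements is
\[
\frac{(p^2 - 1)|Z(R)|}{(p-1)|Z(R)|} = p + 1.
\]
Adding the centralizer $C(0_R) = R$ (equivalently, the centralizers equal to $R$, which are exactly those of central elements) gives $|\Cent(R)| = p + 2$.

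The main obstacle to watch for is the claim that these $p+1$ centralizers are genuinely distinct and that the partition is clean — that is, I must confirm each of the $p+1$ subgroups of order $p$ in $\Z_p \times \Z_p$ actually arises as $C(x)/Z(R)$ for some non-central $x$. This follows because every non-central $x$ lands in \emph{some} centralizer of order $p|Z(R)|$, namely $C(x)$ itself, and every coset of $Z(R)$ represented by a non-central element must sit inside such a centralizer; running over all non-central $x$ and using that they exhaust $R - Z(R)$ forces all $p+1$ subgroups to be hit. I would take care to phrase the counting in terms of the disjoint (modulo $Z(R)$) decomposition so that no centralizer is double-counted, exactly as in the proof of Theorem \ref{p^2}.
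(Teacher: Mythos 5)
Your proof is correct, and it reaches the conclusion by a different mechanism than the paper's own proof. The paper fixes generators $Z(R)+a$, $Z(R)+b$ of $R/Z(R)$ and explicitly lists the $p+1$ proper additive subgroups $S_1,\dots,S_{p+1}$ of $R$ properly containing $Z(R)$; it then argues that for $y\in S_j - Z(R)$ the centralizer $C(y)$, being a proper subgroup lying over $Z(R)$, must be one of the $S_q$, and since $y\notin S_q$ for $q\neq j$ it must equal $S_j$; hence the proper centralizers are exactly $S_1,\dots,S_{p+1}$, giving $p+2$ in total. You instead transplant the counting scheme of Theorem \ref{p^2} to the quotient: the correspondence theorem forces every centralizer of a non-central element to have order $p|Z(R)|$, the trivial intersection of distinct order-$p$ subgroups of $\Z_p\times\Z_p$ forces distinct proper centralizers to meet exactly in $Z(R)$, and then $|R-Z(R)|=(p^2-1)|Z(R)|$ splits into disjoint blocks of size $(p-1)|Z(R)|$, one per proper centralizer, yielding $p+1$ of them. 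Both proofs rest on the same structural facts about $\Z_p\times\Z_p$, but yours buys uniformity --- it subsumes Theorem \ref{p^2} as the special case where $Z(R)$ is trivial and avoids explicit coset bookkeeping --- while the paper's buys an explicit identification of the centralizers: they are precisely all additive subgroups lying strictly between $Z(R)$ and $R$. One small remark: the worry in your final paragraph, that you must confirm every order-$p$ subgroup of $R/Z(R)$ arises as some $C(x)/Z(R)$, is not actually needed for your count; the partition argument alone delivers the number $p+1$, and the fact that all $p+1$ subgroups are realized follows as a corollary rather than serving as a prerequisite.
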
 

\begin{proof}
We write $Z := Z(R)$. Since  $R/Z \cong \Z_p \times \Z_p$ we have 
\[ 
\frac{R}{Z} = \langle Z + a, Z + b \;:\; p(Z + a) = p(Z + b) = Z; a, b \in R \rangle. 
\] 
If $S/Z$ is additive non-trivial subgroup of $R/Z$ then $|S/Z| = p$. Therefore any additive proper subgroup of $R$ properly containing $Z$ has $p$ \;disjoint right cosets. Hence the proper additive subgroups of $R$ properly containing $Z$ are
\begin{align*}
&S_m = Z \cup (Z + (a + mb)) \cup (Z + 2(a + mb)) \cup \dots \cup (Z + (p - 1)(a + mb)),\\
      & \qquad \text{where } 1 \leq m \leq (p - 1),\\
&S_p = Z \cup (Z + a) \cup (Z + 2a) \cup \dots \cup (Z + (p - 1)a) \text{ and} \\
&S_{p + 1} = Z \cup (Z + b) \cup (Z + 2b) \cup \dots \cup (Z + (p - 1)b).      
\end{align*}
Now for any $x \in R - Z$,  we have $Z + x$ is equal to  $Z + k$ for some $k \in \{ma, mb,   a + mb, 2(a + mb), \dots, (p-1)(a + mb): 1 \leq m \leq (p - 1) \}$. Therefore $C(x) = C(k)$. Again, let $y \in S_j - Z$ for some $j \in \{1, 2, \dots, (p + 1)\}$, then $C(y) \neq S_q$, where $1 \leq q(\neq j) \leq (p + 1)$. Thus $C(y) = S_j$. Hence $|\Cent(R)| = p + 2$.     
\end{proof}

Further, we have the following theorem analogous to  Lemma $2.7$ of \cite{ashrafi00}.

\begin{thm}\label{pring}
Let $R$ be a non-commutative ring whose order is a power of a prime $p$. Then $|\Cent(R)| \geq p + 2$, and equality holds if and only if $\frac{R}{Z(R)} \cong \Z_p \times \Z_p$.
\end{thm}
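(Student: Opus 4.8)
The plan is to prove the lower bound and the two directions of the equality statement in turn, reducing everything to the additive quotient $\bar R := R/Z(R)$, which is a non-cyclic $p$-group because $R$ is non-commutative, so $|\bar R| \geq p^2$. For the lower bound, I would note that by Lemma \ref{lemma2} the centralizers of non-central elements cover $R$, and since each such $C(x)$ contains $Z(R)$ and is proper (as $x \notin Z(R)$ forces $C(x)\neq R$), their images give a covering of $\bar R$ by proper subgroups. Enlarging each to a maximal subgroup, which has index $p$ in a $p$-group, and counting nonzero elements—any $p$ subgroups of index $p$ cover at most $p(\frac{|\bar R|}{p}-1)=|\bar R|-p$ of the $|\bar R|-1$ nonzero elements—forces at least $p+1$ distinct maximal subgroups, hence at least $p+1$ distinct non-central centralizers; adding the centralizer $R$ itself gives $|\Cent(R)| \geq p+2$. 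The reverse implication of the equality statement is then immediate: if $\frac{R}{Z(R)} \cong \Z_p \times \Z_p$, Theorem \ref{dc} yields $|\Cent(R)| = p+2$.

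For the forward implication, suppose $|\Cent(R)| = p+2$, so there are exactly $p+1$ distinct non-central centralizers $C_1,\dots,C_{p+1}$. The crux, and what I expect to be the main obstacle, is to show that every proper centralizer is commutative; pure covering arguments cannot distinguish $\Z_p\times\Z_p$ from larger quotients, since every non-cyclic $p$-group admits a cover by exactly $p+1$ proper subgroups, so a centralizer-specific idea is needed. I would argue by contradiction: if some $C=C(x)$ with $x$ non-central is non-commutative, then applying the lower bound just proved to the non-commutative $p$-ring $C$ gives $|\Cent(C)| \geq p+2$. Since the centralizers of $C$ are exactly the sets $C(y)\cap C$ for $y\in C$, the assignment $C(y)\mapsto C(y)\cap C$ maps $\{C(y):y\in C\}$ onto $\Cent(C)$, so $|\{C(y):y\in C\}|\geq p+2$. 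As this set lies inside $\Cent(R)$, which has size $p+2$, it must equal $\Cent(R)$. But every $y\in C=C(x)$ satisfies $x\in C(y)$, so $x$ would lie in every centralizer of $R$; by Lemma \ref{lemma1} this places $x\in Z(R)$, a contradiction.

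With all proper centralizers commutative, I would deduce that whenever $a$ is non-central and $a'\in C(a)$ is non-central, then $C(a')=C(a)$ (each contains the other by commutativity). Hence each non-central element lies in a unique $C_i$, so $C_i\cap C_j=Z(R)$ for $i\neq j$, and $\bar R$ is the union of the $p+1$ nontrivial subgroups $\bar C_i:=C_i/Z(R)$, pairwise meeting only in $0$. To finish I would fix $\bar C_1,\bar C_2$ and a nonzero $w\in\bar C_2$ and send each nonzero $u\in\bar C_1$ to the index of the component containing $u+w$; this map lands in $\{3,\dots,p+1\}$ and is injective, since $u+w,u'+w\in\bar C_k$ with $k\neq 1$ would give $u-u'\in\bar C_1\cap\bar C_k=0$. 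Thus $|\bar C_1|\leq p$, forcing $|\bar C_1|=p$, and by symmetry every $|\bar C_i|=p$, so $|\bar R|=1+(p+1)(p-1)=p^2$. Being non-cyclic of order $p^2$, $\bar R\cong\Z_p\times\Z_p$, which completes the characterization.
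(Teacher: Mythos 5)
Your proposal is correct, and it takes a genuinely different route from the paper. The paper proves both the lower bound and the forward direction of the equality by invoking Cohn's theorem on $n$-sum groups \cite{Cohn}: writing $R$ as the union of its proper centralizers $A_2,\dots,A_k$, Cohn's inequality $|R|\leq\sum_{i=3}^{k}|A_i|$ together with $|A_i|\leq |R|/p$ gives $k\geq p+2$; in the equality case Cohn's conditions force every proper centralizer to have order $|R|/p$ with $A_2\cap A_3=Z(R)$, whence $|R|=|A_2+A_3|=|R|^2/\bigl(p^2|Z(R)|\bigr)$ and $|R:Z(R)|=p^2$. You avoid Cohn entirely: your lower bound is an elementary covering count in $\bar R=R/Z(R)$ after enlarging centralizer images to maximal (index-$p$) subgroups, and your forward direction rests on a new structural step absent from the paper — that when $|\Cent(R)|=p+2$ every proper centralizer must be commutative, proved by the neat self-referential device of applying the lower bound to a putative non-commutative centralizer $C=C(x)$ and then using Lemma \ref{lemma1} to force $x\in Z(R)$. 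From commutativity you get that the proper centralizers pairwise intersect in $Z(R)$, so their images partition the nonzero elements of $\bar R$, and your injective-map count forces each image to have order $p$ and $|\bar R|=p^2$. What each approach buys: the paper's argument is shorter once Cohn's theorem is granted, while yours is self-contained and more transparent; moreover your commutative-centralizer lemma parallels the technique the paper uses (via \cite{o&v}) for rings of order $p^3$, and your partition argument recovers, rather than assumes, the fact that all proper centralizers have order $p|Z(R)|$. One point worth making explicit if you write this up: the order of the subring $C$ is a power of $p$ by Lagrange's theorem applied to the additive group, which is what licenses applying the already-proved lower bound to $C$.
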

\begin{proof}
Let $R$ be a non-commutative ring whose order is a power of a prime $p$. Suppose $k = |\Cent(R)|$.
 Let $A_1, \dots, A_k$ be the distinct centralizers of $R$ such that $|A_1| \geq \cdots \geq |A_k|$ and $A_1 = R$. So $R = \underset{i = 2}{\overset{k}{\cup}} A_i$ and by Cohn's theorem in \cite{Cohn}, we have $|R| \leq \underset{i = 3}{\overset{k}{\sum}} |A_i|\;\;(\text{as}\; A_i\text{'s are additive groups})$.
Also $|A_i| \leq \frac{|R|}{p}$, where $i \neq 1$. Hence
\[
|R|  \leq \underset{(k - 2)- \text{times}}{\underbrace{\frac{|R|}{p} + \cdots + \frac{|R|}{p}}}
\]
which implies $|R|  \leq (k -2)\frac{|R|}{p}$ and so $k \geq p + 2$. That is $|\Cent(R)| \geq p + 2$.

For the equality, if $\frac{R}{Z(R)} \cong  \Z_p \times \Z_p$ then by Theorem \ref{dc}, we have $|\Cent(R)| = p + 2$. Conversely, we assume that $l = |\Cent(R)| = p + 2$. Suppose $A_1, A_2, \dots, A_l$ are distinct centralizers of $R$ such that $|A_1| \geq \cdots \geq |A_l|$ and $A_1 = R$. So $R = \underset{i = 2}{\overset{l}{\cup}} A_i$ and by Cohn's theorem in \cite{Cohn}, we have $|R| \leq \underset{i = 3}{\overset{l}{\sum}} |A_i|.$
Also $|A_i| \leq \frac{|R|}{p}$, where $i \neq 1$. Suppose, there exists an $A_i$ such that $|A_i| < \frac{|R|}{p}$ for $3 \leq i \leq l$ then
\[
|R|  < \underset{(l - 2)- \text{times}}{\underbrace{\frac{|R|}{p} + \cdots + \frac{|R|}{p}}} = (l - 2)\frac{|R|}{p} = |R|,
\]
 a contradiction. Hence $|A_3| = \frac{|R|}{p}, \dots, |A_l| = \frac{|R|}{p}$. Also $|A_2| \geq \dots  \geq |A_l|$, so $|A_i| = \frac{|R|}{p}$, where $2 \leq i \leq l$. Hence $ \overset{l}{\underset{i = 3}{\sum}}|A_i| = (l - 2)\frac{|R|}{p} = |R|$. Therefore $ \overset{l}{\underset{i = 3}{\sum}}|A_i| = |R|$ if and only if $A_2 + A_m = R$, for all $m \neq 2$ and $A_k \cap A_l \subseteq A_2$ for all $k \neq l$ (By Cohn's Theorem in \cite{Cohn}). Interchanging $A_i$'s we have $A_2 \cap A_3 = Z(R)$. Thus 
\[|R| = |A_2 + A_3| = \frac{|A_2||A_3|}{|A_2 \cap A_3|}  = \frac{|R|^2}{p^2|Z(R)|} 
\]
which gives $| R:Z(R)| = p^2 $. Hence $\frac{R}{Z(R)} \cong  \Z_p \times \Z_p$,
since $R$ is non-commutative. This completes the proof. 
\end{proof}

We conclude this section by the following result.
\begin{prop}
There exists an  $8$-centralizer ring.  
\end{prop}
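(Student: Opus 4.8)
The plan is to exhibit a single concrete example, namely the ring $R = M_2(\Z_2)$ of all $2 \times 2$ matrices over $\Z_2$, and to show directly that $|\Cent(R)| = 8$. It is worth noting first why a direct-product argument via Proposition \ref{prop2} cannot produce the value $8$: by Proposition \ref{prop1} a commutative ring is a $1$-centralizer ring, and by Theorem \ref{theorem1} no ring is a $2$- or $3$-centralizer ring, so the admissible centralizer-counts lie in $\{1\}\cup\{4,5,6,\dots\}$. The only factorization of $8$ into admissible counts uses the forbidden factor $2$, and hence an $8$-centralizer ring must be sought among rings that do not split as a nontrivial product. The ring $R = M_2(\Z_2)$ is such a ring: it is non-commutative, has order $16$, possesses the identity $I$, and its center $Z(R)$ consists precisely of the scalar matrices $\{0_R, I\}$.

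The main computation is to determine $C(x)$ for a non-central $x$. Every non-central element of $R$ is non-scalar, so $\{I, x\}$ is linearly independent and, by the Cayley--Hamilton relation $x^2 = \mathrm{tr}(x)\,x + \det(x)\,I$ over $\Z_2$, the set $\Z_2[x] = \{0_R, I, x, I + x\}$ is a subring of order $4$ contained in $C(x)$. I would then argue that in fact $C(x) = \Z_2[x]$: since $x$ is a non-scalar $2 \times 2$ matrix over a field, its minimal polynomial has degree $2$, so $x$ is non-derogatory and its centralizer coincides with the subalgebra it generates. If one prefers an elementary verification, writing $x = \begin{bmatrix} a & b \\ c & d \end{bmatrix}$ and solving the linear system $xy = yx$ for $y = \begin{bmatrix} p & q \\ r & s \end{bmatrix}$ shows the solution space is exactly two-dimensional whenever $x$ is non-scalar. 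Consequently $|C(x)| = 4$ for every non-central $x$.

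It then remains to count the distinct centralizers. Each centralizer $\Z_2[x]$ of a non-central element contains exactly two non-central elements, namely $x$ and $I + x$, the remaining elements $0_R, I$ being central. Since $R$ has $16 - 2 = 14$ non-central elements, by the previous step these are partitioned into $14/2 = 7$ distinct centralizers, each of order $4$; equivalently, these are the seven two-dimensional subalgebras of $R$ containing $I$, matching the count $(2^3 - 1)/(2 - 1) = 7$ of lines in the quotient $R/\langle I\rangle$. Adjoining the single remaining centralizer $C(0_R) = R$, which by Lemma \ref{lemma1} is also $C(x)$ for every central $x$, we obtain $|\Cent(R)| = 7 + 1 = 8$, as required.

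The step I expect to be the main obstacle is the identity $C(x) = \Z_2[x]$, that is, ruling out the possibility $|C(x)| = 8$. Size-counting alone (using that $C(x)$ is an additive subgroup properly between $\Z_2[x]$ and $R$) leaves $|C(x)| = 8$ open, so this is the one place where the structural fact that a non-derogatory matrix is centralized only by polynomials in itself, or the explicit solution of the commuting equations above, is genuinely needed.
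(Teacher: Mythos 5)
Your proof is correct, but it takes a genuinely different route from the paper's. The paper exhibits the quaternion-style ring $R=\{a+bi+cj+dk : a,b,c,d\in\Z_p\}$ with $i^2=j^2=k^2=-1$, $ij=k=-ji$, etc., and counts centralizers by a case analysis on which of $b,c,d$ vanish, obtaining eight cases and concluding $|\Cent(R)|=8$; you instead take $M_2(\Z_2)$ and argue via Cayley--Hamilton (so $\Z_2[x]=\{0_R,I,x,I+x\}\subseteq C(x)$), the non-derogatory property of non-scalar $2\times 2$ matrices (so $C(x)=\Z_2[x]$, the step you rightly flag as the one needing real input), and the pairing of the $14$ non-central elements into $7$ sets $\{x,I+x\}$ sharing a common centralizer, giving $7+1=8$. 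The comparison is decidedly in your favor, because the paper's count is in fact erroneous: its case analysis conflates cases with centralizers. The mixed cases produce sets such as $\{x+yi+zj : bz=cy\}$ which depend on the ratio $b:c$, so for odd $p$ those cases contribute $p-1$ (and the final case $(p-1)^2$) distinct centralizers, for a total of $p^2+p+2$ rather than $8$; and for $p=2$ the relation $ji=-k=k$ (together with its companions) makes the ring commutative, hence a $1$-centralizer ring. Indeed, for odd $p$ the paper's ring is isomorphic to $M_2(\Z_p)$, since quaternion algebras over finite fields split, and $p^2+p+2=8$ holds precisely when $p=2$; so your $M_2(\Z_2)$ is exactly the corrected member of this family, and your argument, unlike the paper's, actually proves the proposition.
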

\begin{proof}
We consider the ring $R = \{a + bi + cj + dk : a, b, c, d \in \Z_p, i^2 = j^2 = k^2 = -1, ij=k, jk=i, ki=j, ji=-k, kj=-i, ik=-j\}$.  If $b = c = d = 0$, then clearly $C(a) = R$. If $c = d = 0$ and  $b \neq 0$, then $C(a + bi)= \{x + yi : x, y \in \Z_p\}$. If $b = d =0$ and  $c \neq 0$, then $C(a + cj)= \{x + zj : x, z \in \Z_p\}$. If $b = c =0$ and  $d \neq 0$, then $C(a + dk)= \{x + wk : x, w \in \Z_p\}$. If $d =0$ and  $b, c \neq 0$, then $C(a + bi +cj)= \{x + yi + zj : bz = cy, x, y, z \in \Z_p\}$. If $c = 0$ and  $b, d \neq 0$, then $C(a + bi + dk)= \{x + yi + wk : bw = dy, x, y, w \in \Z_p\}$. If $b =0$ and  $c, d \neq 0$, then $C(a + cj + dk)= \{x + zj + wk : cw = dz, x, z, w \in \Z_p\}$. If $b, c, d \neq 0$, then $C(a + bi + cj + dk)= \{x + yi + zj + wk : bz= cy, dy = bw, dz= cw, x, y, z, w \in \Z_p\}$. Hence $|\Cent(R)| = 8$. 
\end{proof}

\section{$4$-centralizer rings}
In this section, we give a characterization of finite $4$-centralizer rings analogous to  Theorem 2 of \cite{bG94}. The following lemma which is useful in characterization of $4$-centralizer rings. 

\begin{lemma}\label{lemma4}
Let $R$ be a $4$-centralizer finite ring. Then at least one of the centralizers of non-central elements has index $2$ in $R$.
\end{lemma}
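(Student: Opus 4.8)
The plan is to exploit the covering of $R$ by the centralizers of its non-central elements, and then convert that covering into an elementary inequality among additive indices. Since $R$ is a $4$-centralizer ring it is non-commutative, for by Proposition \ref{prop1} a commutative ring is $1$-centralizer; hence $Z(R) \neq R$. Exactly one of the four distinct centralizers equals $R$, namely $C(z)$ for any central $z$. Writing $\Cent(R) = \{R, A, B, C\}$, each of $A, B, C$ is a proper additive subgroup of $R$, and each occurs as $C(r)$ for some non-central $r$ (if $C(r) = R$ then $r \in Z(R)$, so $C(r)$ would not be proper). By Lemma \ref{lemma2}, $R$ is the union of the centralizers of its non-central elements, so $R = A \cup B \cup C$.

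Next I would turn this covering into a numerical constraint. Put $a = |R:A|$, $b = |R:B|$, $c = |R:C|$; since $A, B, C$ are proper additive subgroups, $a, b, c \geq 2$. Because the additive identity $0_R$ lies in all three subgroups, they are not pairwise disjoint, so $|A \cap B| \geq 1$ and the crude inclusion bound
\[
|R| = |A \cup B \cup C| \leq |A| + |B| + |C| - |A \cap B| \leq |A| + |B| + |C| - 1
\]
holds. In particular $|R| < |A| + |B| + |C|$, and dividing by $|R|$ yields $1 < \frac{1}{a} + \frac{1}{b} + \frac{1}{c}$.

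Finally I would rule out the possibility that all three indices are at least $3$: if $a, b, c \geq 3$, then $\frac{1}{a} + \frac{1}{b} + \frac{1}{c} \leq \frac{1}{3} + \frac{1}{3} + \frac{1}{3} = 1$, contradicting the strict inequality just obtained. Hence at least one of $a, b, c$ equals $2$, i.e. at least one of the centralizers $A, B, C$ of non-central elements has index $2$ in $R$, which is precisely the assertion of the lemma.

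I do not anticipate a serious obstacle. The two points requiring care are: (i) confirming that all three proper members of $\Cent(R)$ genuinely arise as centralizers of non-central elements and that $R$ is their union, which is exactly where Lemma \ref{lemma2} is used; and (ii) ensuring the counting inequality is strict, which rests only on the common element $0_R$ forcing a nontrivial overlap, so that the three indices cannot all equal $3$ simultaneously. A little more work would in fact give the stronger Scorza-type conclusion that all three proper centralizers have index $2$ and $\frac{R}{Z(R)} \cong \Z_2 \times \Z_2$, but only the single index-$2$ statement is needed here.
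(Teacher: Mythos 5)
Your proposal is correct and takes essentially the same route as the paper: both argue by contradiction assuming all three proper centralizers have index at least $3$, use the covering $R = A \cup B \cup C$ of Lemma \ref{lemma2}, and derive the strict counting inequality $|R| < |A| + |B| + |C| \leq |R|$ from the fact that the three subgroups overlap. The only cosmetic difference is that the paper subtracts $2|Z(R)|$ in the inclusion--exclusion bound where you subtract $|A \cap B| \geq 1$; both yield the needed strictness.
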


\begin{proof}
Let $A, B, C$ be the three proper centralizers of $R$. Suppose none of $A, B, C$ has index $2$, that is $|R : A| \geq 3, |R : B| \geq 3, |R : C| \geq 3$. Then
as $R = A \cup B \cup C $, we have
\[
|R|  \leq  |A| + |B| + |C| - 2|Z(R)|  \leq \frac{|R|}{3} + \frac{|R|}{3} + \frac{|R|}{3} - 2|Z(R)| < |R|,
\]
which is a contradiction. Hence the lemma follows.
\end{proof}


We have the following characterization of finite $4$-centralizer rings.
 
\begin{thm}\label{4c}
Let $R$ be a non-commutative finite ring. Then $|\Cent(R)| = 4$ if and only if $\frac{R}{Z(R)} \cong \Z_2 \times \Z_2$.
\end{thm}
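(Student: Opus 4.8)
The plan is to prove the two implications separately, with the reverse implication being essentially free. If $\frac{R}{Z(R)} \cong \Z_2 \times \Z_2$, then applying Theorem \ref{dc} with $p = 2$ gives $|\Cent(R)| = 2 + 2 = 4$ at once (and $R$ is automatically non-commutative since $\frac{R}{Z(R)}$ is non-trivial). So the whole content lies in the forward direction: assuming $R$ is non-commutative with $|\Cent(R)| = 4$, I must recover the additive structure $\frac{R}{Z(R)} \cong \Z_2 \times \Z_2$.

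For the forward direction, write $Z := Z(R)$ and let $A, B, C$ denote the three proper centralizers of $R$. By Lemma \ref{lemma2} the centralizers of non-central elements cover $R$, so $R = A \cup B \cup C$; by Lemma \ref{lemma1}, $Z = A \cap B \cap C$; and by Lemma \ref{lemma3} no two of $A, B, C$ can already cover $R$, so this three-fold cover is irredundant. Lemma \ref{lemma4} then lets me assume, after relabeling, that $|R : A| = 2$. The goal is now a purely additive-group statement: an abelian group that is the irredundant union of three proper subgroups, one of which has index $2$, must have Klein-four quotient.

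The key step I would carry out is a counting argument in the additive group. Since $|R:A| = 2$, every element outside $A$ lies in $B \cup C$, so $R \setminus A = (B \setminus A) \cup (C \setminus A)$. As $B, C \not\subseteq A$ (otherwise two subrings would cover $R$, contradicting Lemma \ref{lemma3}), each of $B \cap A, C \cap A$ has index $2$ in $B, C$ respectively, giving $|B \setminus A| = \frac{|B|}{2}$ and $|C \setminus A| = \frac{|C|}{2}$. Inclusion–exclusion on $R \setminus A$, together with $(B \cap C) \cap A = Z$, then yields
\[
\frac{|R|}{2} = \frac{|B|}{2} + \frac{|C|}{2} - \bigl(|B \cap C| - |Z|\bigr).
\]
Using the bounds $|B|, |C| \leq \frac{|R|}{2}$ and the inclusion $Z \subseteq B \cap C$, this forces $|B| = |C| = \frac{|R|}{2}$ and $B \cap C = Z$. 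Thus all three centralizers have index $2$, and rerunning the same argument with $B$ (or $C$) in the role of $A$ gives $A \cap C = A \cap B = Z$ as well; that is, the three subgroups pairwise intersect exactly in $Z$.

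Finally, I would pass to the quotient $\frac{R}{Z}$. Each of $A/Z, B/Z, C/Z$ has index $2$ in $\frac{R}{Z}$, the three cover $\frac{R}{Z}$, and they pairwise meet only in the identity coset. Writing $n = |A/Z| = \frac{1}{2}|R/Z|$, counting the cover gives $3(n - 1) + 1 = 2n$, whence $n = 2$ and $|R/Z| = 4$; moreover $A/Z, B/Z, C/Z$ are three distinct subgroups of order $2$. Since a cyclic group of order $4$ has only one subgroup of order $2$, we conclude $\frac{R}{Z(R)} \cong \Z_2 \times \Z_2$. I expect the main obstacle to be the combinatorial covering step — extracting, from the single index-$2$ hypothesis of Lemma \ref{lemma4}, that \emph{all} three centralizers have index $2$ with pairwise intersection equal to $Z$; everything after that is bookkeeping. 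This step is the additive-group analogue of Scorza's theorem on covering a group by three proper subgroups, which is precisely the mechanism behind Theorem 2 of \cite{bG94} in the group setting.
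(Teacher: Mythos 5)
Your proof is correct, and it reaches the conclusion by a genuinely different mechanism in the crucial middle step. Both arguments share the same skeleton: the reverse direction is Theorem \ref{dc} with $p=2$, and the forward direction sets up the covering $R = A \cup B \cup C$ via Lemmas \ref{lemma1}--\ref{lemma3} and invokes Lemma \ref{lemma4} for one index-$2$ centralizer. The divergence is in how the pairwise intersections are forced down to $Z(R)$: the paper argues ring-theoretically, first choosing $a \in A - (B \cup C)$, $b \in B - (A \cup C)$, $c \in C - (A \cup B)$ with $C(a)=A$, $C(b)=B$, $C(c)=C$, and then showing that any $x \in (A \cap B) - Z(R)$ would have $C(x) \notin \{R, A, B, C\}$, i.e.\ would be a fifth centralizer, a contradiction; only afterwards does it use $|A+B| = |A||B|/|Z(R)| \leq |R|$ and inclusion--exclusion to get $|R : Z(R)| = 4$, finishing with the fact that $R/Z(R)$ cannot be cyclic. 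You instead never re-use the hypothesis $|\Cent(R)| = 4$ after the covering is in place: your inclusion--exclusion count on $R \setminus A$ (the Scorza mechanism) simultaneously forces $|B| = |C| = \frac{|R|}{2}$ and $B \cap C = Z(R)$ from purely additive-group considerations, and your endgame (three distinct order-$2$ subgroups in a group of order $4$) even avoids needing the non-cyclicity of $R/Z(R)$ for non-commutative $R$. Your route is more elementary and isolates the statement as an instance of Scorza's theorem; what the paper's ``extra centralizer'' style buys is that it is exactly the argument that scales to the $5$-centralizer characterization in the next section (see the proof of Lemma \ref{lemma5C2}), where a single covering count no longer suffices.
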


\begin{proof}
If $\frac{R}{Z(R)} \cong \Z_2 \times \Z_2$ then by Theorem \ref{dc}, we have    $|\Cent(R)|  =  4$. 

Conversely, let $|\Cent(R)| = 4$ then $R$ has exactly four distinct centralizers, say $R, A, B, C$ where $A, B, C$ are centralizers of three distinct non-central elements of $R$.  

By Lemma \ref{lemma3}, $R$ cannot be written as the union of two of its proper subrings of $R$. Therefore we may choose $a \in A - (B \cup C), b \in B - (C \cup A), c \in C - (A \cup B)$ respectively. It can be easily seen that $C(a) = A, C(b) = B, C(c) = C$. By Lemma \ref{lemma4}, at least one of the centralizers $A, B, C$, say $A$ has index $2$ in $R$, that is $|R : A| = 2$.

Now, let $x \in (A \cap B) - Z(R)$ then $C(x) \neq R$. If $C(x) = A$ then $a, b \in C(x)$. So, $C(x) \neq A$. Similarly it can be seen that $C(x) \neq B$. If $C(x) = C$ then $x \in A \cap B \cap C = Z(R)$ (using Lemma \ref{lemma1}), which is a contradiction. Therefore $|\Cent(R)|$ must be at least $5$, which is again a contradiction. So $A \cap B = A \cap B \cap C = Z(R)$. Similarly it can be seen that  $B \cap C = Z(R), A \cap C = Z(R)$. Again $A, B, C$ are additive subgroups of $R$, therefore
\[
|R| \geq |A + B| = \frac{|A||B|}{|A \cap B|} = \frac{|A||B|}{|Z(R)|}
\]
which gives $|B| \leq 2|Z(R)|$.
Since $Z(R) \subset B$, so $\frac{|B|}{2} \leq |Z(R)| < |B|$. Hence $|B| = 2|Z(R)|$. Similarly $|C| = 2|Z(R)|$. Therefore
\[
|R| = |A| + |B| + |C| - 2|Z(R)|  = \frac{|R|}{2}  +  2|Z(R)|
\]
which gives $|R : Z(R)| = 4$ and hence $\frac{R}{Z(R)} \cong \Z_2 \times \Z_2$.
\end{proof}

\section{$5$-centralizer rings}
In this section, we give a characterization of finite $5$-centralizer rings analogous to  Theorem $4$ of \cite{bG94}. The following lemmas are   useful in this regard.

 
\begin{lemma}\label{lemma5C1}
Let $R$ be a ring and $R = A \cup B \cup C$, where $A, B, C$ are the proper distinct subrings. We put $K  = A \cap B \cap C, L = A \cap B - K, M = A \cap C - K$, $N = B \cap C - K$ and $A' = A - (B \cup C)$, $B' = B - (A \cup C)$, $C' = C - (A \cup B)$. Then 
\bnum
\item $L = M = N = \phi$,
\item  $A' + B' \subseteq C', B' + C' \subseteq A'$ and $C' + A' \subseteq B'$,
\item  $A' + A' \subseteq K, B' + B' \subseteq K$ and $C' + C' \subseteq K$,
\item  $|R : K| = 4$.
\enum 
\end{lemma}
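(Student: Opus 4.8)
The plan is to treat $A$, $B$, $C$ throughout as additive subgroups of $R$ and to exploit the single elementary principle that drives every covering-by-three-subgroups argument: if $u$ and $v$ both lie in one of $A, B, C$, say $S$, then so does $u+v$, while if $u+v \in S$ and exactly one of $u, v$ lies in $S$ then the other cannot. Since $R = A \cup B \cup C$, every sum of two elements must land in one of the three subgroups, and this alone forces the combinatorial structure we want. First I would record that the covering is irredundant: if, say, $A' = \phi$ then $A \subseteq B \cup C$, whence $R = B \cup C$, contradicting Lemma \ref{lemma3}; thus $A', B', C'$ are all non-empty, and $K = A \cap B \cap C$ is an additive subgroup.

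Next I would prove part (b) directly. Given $a \in A'$ and $b \in B'$, the sum $a+b$ cannot lie in $A$ (else $b = (a+b)-a \in A$) nor in $B$ (else $a \in B$), so $a+b \in C$; the same two exclusions show $a+b \notin A \cup B$, hence $a+b \in C'$. By the symmetry of the hypotheses in $A, B, C$ this gives all three inclusions $A'+B' \subseteq C'$, $B'+C' \subseteq A'$ and $C'+A' \subseteq B'$. Part (a) then follows from a similar contradiction: if $x \in L = (A \cap B) - K$ and we pick any $c \in C'$ (available since $C' \neq \phi$), then $x+c$ avoids $A$ and $B$ (because $x$ lies in both and $c$ in neither) and also avoids $C$ (because $c \in C$ but $x \notin C$), contradicting $R = A \cup B \cup C$; hence $L = \phi$, and symmetrically $M = N = \phi$.

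For part (c), take $a_1, a_2 \in A'$; then $a_1+a_2 \in A$ automatically, and to place it in $B$ and $C$ I would translate by elements of the other two sets using part (b): for $b \in B'$ we get $(a_1+a_2)+b = a_2 + (a_1+b) \in A' + C' \subseteq B'$, so $a_1+a_2 = ((a_1+a_2)+b) - b \in B$, and symmetrically $a_1+a_2 \in C$, whence $a_1+a_2 \in K$. Finally, for part (d), part (a) yields the disjoint decompositions $A = K \cup A'$, $B = K \cup B'$, $C = K \cup C'$ and hence $R = K \cup A' \cup B' \cup C'$ as a disjoint union. The step requiring the most care is the cardinality count: fixing $a_0 \in A'$, I would check that $-a_0 \in A'$ and that $a_0 + K = A'$ — the inclusion $A' \subseteq a_0 + K$ uses part (c) (for $a \in A'$, $a - a_0 = a + (-a_0) \in A' + A' \subseteq K$), while $a_0 + K \subseteq A'$ uses that adding an element of $K \subseteq B \cap C$ to $a_0 \notin B \cup C$ cannot move it into $B$ or $C$. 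Thus $A'$ is exactly one coset of $K$, so $|A'| = |B'| = |C'| = |K|$, giving $|R| = 4|K|$, i.e. $|R : K| = 4$. I expect this last counting step — pinning $A'$ down as precisely one coset of $K$ rather than merely a union of cosets — to be the only place where anything beyond the covering principle is genuinely needed.
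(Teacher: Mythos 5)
Your proof is correct and follows essentially the same translation-by-a-subgroup-element argument as the paper's: (a) by adding an element of $C'$ to an element of $L$, (b) by the two exclusion arguments, (c) by translating $a_1+a_2$ by some $b \in B'$ and invoking (b), and (d) by showing each of $A', B', C'$ is exactly one coset of $K$ so that $|R| = 4|K|$. The only differences are cosmetic: you prove (b) before (a) (so (b) stands independently of the partition), you argue (c) by direct membership in $B \cap C$ rather than the paper's contradiction via $B' \cap C' = \phi$, and you make explicit the non-emptiness of $A', B', C'$ via Lemma \ref{lemma3}, which the paper uses implicitly.
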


\begin{proof}
(a)  We consider $l \in L$ and $c' \in C'$. Then $c' + l \in A$ or $B$ or $C$. If $c' + l \in A$ then $c' + l + (-l) = c' \in A$, a contradiction. If $c' + l \in B$ then $c' + l + (-l) = c' \in B$, a contradiction. If $c' + l \in C$ then $(-c') + c' + l = l \in C$, a contradiction. Since $C' \neq \phi$, we must have $L = \phi$. Similarly $M = N = \phi$. 

(b) Let $a' \in A'$, then $a' \in A \Rightarrow -a' \in A \Rightarrow -a' \in K$ or $A'$. If $-a' \in K$ then $a' \in K$, a contradiction. Hence  $-a' \in A'$. Similarly if $b' \in B'$ then $-b' \in B'$ and if  $c' \in C'$ then $-c' \in C'$. Suppose $a' \in A', b' \in B'$ then $a' +  b' \in K$ or $A'$ or $B'$ or $C'$. If $a' + b' \in A' \subseteq A$ then $b'= -a' + a' + b' \in A$, a contradiction. If $a' + b' \in B' \subseteq B$, then $a' = a' + b' + (-b') \in B $, a contradiction. If $a' + b' \in K$, then $a' + b' \in A$, a contradiction. Hence $a' + b' \in C'$. Thus $A' + B' \subseteq C'$. Similarly it can be seen that $B' + C' \subseteq A'$ and $C' + A' \subseteq B'$.

(c) Let $a', {a_1}' \in A' \subseteq A$. So $a' + {a_1}' \in A \Rightarrow \; a' + {a_1}' \in A'$ or $K$. Let $a' + {a_1}' \in A'$. We consider $b' + a' + {a_1}'$, for some $b' \in B'$. Then by second part we have $b' + (a' + {a_1}') \in C'$ and $(b' + a') + {a_1}' \in B'$. So $b' + a' + {a_1}' \in B' \cap C'$, a contradiction. Similarly we can show the other two.

(d) From part (a), we have $R = K \cup A' \cup B' \cup C'$. Let $k + a' \in K + a'$ where $k \in K, a' \in A'$ then $k + a' \in A = K \cup A'$. If $k + a' \in K$ then $a' \in K$, a contradiction. So $K + a' \subseteq A'$. Again $x' \in A'$ gives $x' + (-a') \in K $ (by part (c)). So, $x' \in K + a'$. Hence $K + a' = A'$. Similarly it can be seen that $K + b' = B', K + c' = C'$, where $b' \in B', c' \in C'$. Therefore $|R : K| = 4$.  
\end{proof}

\begin{lemma}\label{lemma5C2}
Let $R$ be a $5$-centralizer finite ring and  $A, B, C, D$ be the four proper centralizers of $R$.
Then
\bnum
\item $|R| = |A| + |B| + |C| + |D| - 3|Z(R)|$.
\item If $S$ and $T$ are distinct proper centralizers of $R$, then 
\[
\frac{|S||T|}{|R|} \leq |Z(R)| \leq \frac{|R|}{6}. 
\]
\enum
\end{lemma}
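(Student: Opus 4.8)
The plan is to prove the two parts of Lemma \ref{lemma5C2} by combining the inclusion-exclusion structure forced by Lemma \ref{lemma1} and Lemma \ref{lemma5C1} (for part (a)) with the counting bounds already developed in the $4$-centralizer section (for part (b)).

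For part (a), I would first record that by Lemma \ref{lemma1} the center $Z(R)$ equals the intersection of all centralizers, so in particular it is contained in each of the four proper centralizers $A, B, C, D$. The key structural input is that any two distinct proper centralizers intersect exactly in $Z(R)$. I expect to argue this just as in the proof of Theorem \ref{4c}: if $S, T$ are distinct proper centralizers and $x \in (S \cap T) - Z(R)$, then $C(x)$ is a proper centralizer distinct from $S$ and $T$ (since $C(x)$ must contain the generators of both $S$ and $T$, forcing $C(x) = R$ unless it is a new centralizer), and chasing this through the list of five centralizers produces a contradiction with $|\Cent(R)| = 5$, exactly as in the $4$-centralizer case. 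Once pairwise intersections are all equal to $Z(R)$, the union $R = A \cup B \cup C \cup D$ together with the four-set inclusion-exclusion formula collapses: every intersection of two or more of the proper centralizers equals $Z(R)$, so the alternating sum reduces to $|A| + |B| + |C| + |D|$ minus a net contribution of $3|Z(R)|$ from the overlaps. This gives $|R| = |A| + |B| + |C| + |D| - 3|Z(R)|$.

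For part (b), the lower bound is essentially the additive subgroup product formula. Since $S$ and $T$ are distinct proper centralizers, they are additive subgroups with $S \cap T = Z(R)$ (by the intersection analysis from part (a)), so
\[
|R| \geq |S + T| = \frac{|S||T|}{|S \cap T|} = \frac{|S||T|}{|Z(R)|},
\]
which rearranges to $\frac{|S||T|}{|R|} \leq |Z(R)|$. For the upper bound $|Z(R)| \leq \frac{|R|}{6}$, I would feed part (a) back in: each proper centralizer is a proper additive subgroup properly containing $Z(R)$, so $|A|, |B|, |C|, |D|$ are each at least $2|Z(R)|$ and at most $\frac{|R|}{2}$. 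Substituting the four lower estimates $|A|,\dots,|D| \geq 2|Z(R)|$ into the identity of part (a) yields $|R| \geq 8|Z(R)| - 3|Z(R)| = 5|Z(R)|$, which only gives $|Z(R)| \leq \frac{|R|}{5}$, so I will likely need the sharper estimate that the largest proper centralizer has index exactly $2$ (an analogue of Lemma \ref{lemma4} for five centralizers) to push the constant from $5$ to $6$; combining $|A| = \frac{|R|}{2}$ with $|B|, |C|, |D| \geq 2|Z(R)|$ in part (a) gives $|R| \geq \frac{|R|}{2} + 6|Z(R)| - 3|Z(R)| = \frac{|R|}{2} + 3|Z(R)|$, hence $|Z(R)| \leq \frac{|R|}{6}$.

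The main obstacle I anticipate is the intersection step underlying both parts: rigorously establishing that every pair of distinct proper centralizers meets exactly in $Z(R)$ requires the careful centralizer-of-an-element argument, and I must make sure the five-centralizer bookkeeping (four proper centralizers rather than three) does not leave room for an intermediate subring that would break the clean inclusion-exclusion collapse. I would carry out this intersection lemma first and state it cleanly, since both (a) and (b) depend on it, and only afterwards assemble the counting identities.
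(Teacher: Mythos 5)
Your high-level skeleton (pairwise intersections of proper centralizers equal $Z(R)$, then inclusion--exclusion for (a), then the subgroup product formula for the lower bound in (b)) matches the paper, but two of your key steps do not survive scrutiny.

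First, the intersection step is not ``exactly as in the $4$-centralizer case,'' and your parenthetical justification is wrong. With four proper centralizers $A,B,C,D$, take $x \in (A\cap B)-Z(R)$: the elements $a,b$ with $C(a)=A$, $C(b)=B$ lying in $C(x)$ rule out $C(x)=A$ or $B$, and $x \notin Z(R)$ rules out $C(x)=R$, but nothing forces a contradiction yet --- $C(x)$ could perfectly well be $C$ or $D$. In Theorem \ref{4c} this loophole closes because $A\cap B\cap C = Z(R)$ by Lemma \ref{lemma1}; here $A \cap B \cap C$ need not be $Z(R)$ (only the intersection of \emph{all four} is). The paper therefore runs a two-stage argument: first no element lies in exactly two proper centralizers (using elements outside $C \cup D$), then no element lies in exactly three. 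Moreover, the ``generators'' you invoke --- elements $a \in A$ with $C(a)=A$ lying outside the other proper centralizers --- exist for free in the $4$-centralizer case (via Lemma \ref{lemma3}) but are genuinely nontrivial here: the paper has to prove, by contradiction, that some $a \in A-(B\cup C)$ satisfies $C(a)=A$ and that no proper centralizer is contained in the union of the other three, and both proofs go through Lemma \ref{lemma5C1} and Theorem \ref{4c} (if they failed, one would get $R = A\cup B\cup C$, hence $|R:Z(R)|=4$ and $|\Cent(R)|=4$). You flag this as your ``main obstacle'' but supply no mechanism for it, and that mechanism is most of the actual proof.

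Second, your route to the upper bound $|Z(R)| \leq \frac{|R|}{6}$ rests on a false lemma. You propose an analogue of Lemma \ref{lemma4} asserting that some proper centralizer has index $2$; but in any $5$-centralizer ring $\frac{R}{Z(R)} \cong \Z_3 \times \Z_3$ (the main theorem of that section), and then, as in the proof of Theorem \ref{dc}, all four proper centralizers have index $3$. So no such lemma can be proved, and your fallback from part (a) stalls at $|Z(R)| \leq \frac{|R|}{5}$. The paper bridges the gap differently and more cheaply: part (a) gives $|R:Z(R)| \geq 5$, and $|R:Z(R)| = 5$ would make $\frac{R}{Z(R)} \cong \Z_5$ cyclic, contradicting the fact (quoted in the introduction from MacHale) that $\frac{R}{Z(R)}$ is never cyclic for a non-commutative ring $R$; hence $|R:Z(R)| \geq 6$. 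That non-cyclicity observation is the missing idea in your part (b).
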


\begin{proof}
Let $a \in A - (B \cup C), b \in B - (A \cup C)$ and $c \in C - (A \cup B)$. Suppose there does not exist any $a \in A - (B \cup C)$ such that $C(a) = A$. Then $C(a) = D$ for all $a \in A - (B \cup C)$. Therefore $A - (B \cup C) \subseteq D - (B \cup C)$. Interchanging the roles of $A$ and $D$ we get $A - (B \cup C) = D - (B \cup C)$, which gives $A \cup B \cup C = D \cup B \cup C = R$. Again, by Lemma \ref{lemma5C1} (a), we have $B \cap C = C \cap D$ and so $Z(R) = A \cap B \cap C$. Therefore, by Lemma \ref{lemma5C1} (d), we have $R/Z(R) \cong \Z_2 \times \Z_2$. This gives $|\Cent(R)| = 4$, contradiction. Hence  $C(a) = A$. Similarly $C(b) = B$ and $C(c) = C$.


(a)  Let us assume without loss of generality that $D$ is a subset of $A \cup B \cup C$. Then $R = A \cup B \cup C \cup D = A \cup B \cup C$. Now, by Lemma \ref{lemma5C1}, we have $|R : K| = 4$ where $K = A \cap B \cap C = Z(R)$. 
Thus by Theorem \ref{4c}, $|\Cent(R)| = 4$,                                   which is a contradiction. Therefore no one of $A, B, C$ or $D$ is contained in the union of the other three. 

 Let $r \in (A \cap B) - (C \cup D)$ then $r \in C(a) \cap C(b)$ which gives $a, b \in C(r)$. But $a \notin C(b)$, so $C(r) \neq A, B$. Again $r \notin C, D$; so $C(r) \neq C, D$. Also $C(r) \neq R$, since $r \in R - Z(R)$. Therefore $| \Cent(R)|$ must be at least $6$, a contradiction. Hence $(A \cap B) - (C \cup D) = \phi$. This shows that no element of $R$ is in exactly two proper centralizers.

 Let $r \in (A \cap B \cap C) - D$ then $r \in C(a) \cap C(b) \cap C(c)$. Therefore $a, b, c \in C(r)$. But $b \notin C(a), c \notin C(b)$. So $C(r) \neq A, B, C$. Also $C(r) \neq D, R$; as $r \notin D$ and $r \notin Z(R)$. Therefore $| \Cent(R)|$ must be at least $6$, a contradiction. Hence $A \cap B \cap C - D = \phi$. Thus no element of $R$ is in exactly three proper centralizers.

 From above, it can be seen clearly that
\[
|R| = |A \cup B \cup C \cup D| = |A| + |B| + |C| + |D| - 3|Z(R)|. 
\]

(b) Note that for any two proper centralizers $S$ and $T$ of $R$ we have $S \cap T = Z(R)$, since no element of $R$ is in exactly two as well as three proper centralizers. 
Also any  proper centralizers of $R$  are additive subgroups of $R$, so
 $\frac{|S||T|}{|S + T|} = |S \cap T| = |Z(R)|$. Since $S + T \subseteq R$ we have  $|Z(R)|  \geq \frac{|S||T|}{|R|}$.

Again by part (a), 
\begin{align*}
|R| & \; = \; |A| + |B| + |C| + |D| - 3|Z(R)| \\
& \geq 2|Z(R)| + 2|Z(R)| + 2|Z(R)| + 2|Z(R)| - 3|Z(R)|. 
\end{align*}
Thus $|R : Z(R)| \geq 5$. If $|R : Z(R)| = 5$ then $\frac{R}{Z(R)} \cong {\mathbb{Z}}_5$, a contradiction. Therefore $|Z(R)| \leq \frac{|R|}{6}$. So, $\frac{|S||T|}{|R|} \leq |Z(R)| \leq \frac{|R|}{6}$.
\end{proof}
We would like to mention here that the group theoretic analogues of Lemma \ref{lemma5C1} and Lemma \ref{lemma5C2} can be found in \cite{BBM70} and \cite{bG94} respectively. Now we   prove the main theorem of this section which characterizes finite $5$-centralizer rings.

\begin{thm}
Let $R$ be a finite ring. Then $|\Cent(R)| = 5$ if and only if  $\frac{R}{Z(R)} \cong  \Z_3 \times \Z_3$. 
\end{thm}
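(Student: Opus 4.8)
The plan is to prove both directions, with the forward direction (the characterization $\frac{R}{Z(R)} \cong \Z_3 \times \Z_3$) being the substantial part. The easy direction is immediate: if $\frac{R}{Z(R)} \cong \Z_3 \times \Z_3$, then applying Theorem \ref{dc} with $p = 3$ gives $|\Cent(R)| = 3 + 2 = 5$, so $R$ is a $5$-centralizer ring.

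For the converse, suppose $|\Cent(R)| = 5$, so $R$ has exactly four proper centralizers $A, B, C, D$ (together with $R$ itself). First I would invoke Lemma \ref{lemma5C2}: part (a) gives the counting identity $|R| = |A| + |B| + |C| + |D| - 3|Z(R)|$, and crucially the lemma's proof establishes that no non-central element lies in exactly two or exactly three of the proper centralizers. Hence any two distinct proper centralizers intersect exactly in $Z := Z(R)$. I would then exploit the index bound $|A_i : Z| \leq p$ flavor of argument: writing $z := |Z|$, each proper centralizer $S$ satisfies $\frac{|S|^2}{|R|} \leq \frac{|S||T|}{|R|} \leq z$ from part (b), while $S \supsetneq Z$ forces $|S| \geq 2z$. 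The goal is to pin down $|R : Z| = 9$ and the size of each centralizer.

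The key computational step is to determine the orders. From part (b), $z \leq \frac{|R|}{6}$, so $|R : Z| \geq 6$. I would argue that in fact $|R : Z| = 9$ and each $|S| = 3z$ (index $3$). The mechanism: substituting $|S| = c_S z$ into the identity $|R| = |A| + |B| + |C| + |D| - 3z$ gives $|R : Z| = c_A + c_B + c_C + c_D - 3$. Combined with the constraint $\frac{|S||T|}{|R|} \leq z$, i.e. $c_S c_T \leq |R:Z|$ for distinct $S, T$, one is forced into a narrow range of possibilities. I expect that ruling out the non-cyclic group $\frac{R}{Z}$ being anything other than $\Z_3 \times \Z_3$ — in particular eliminating $|R:Z| = 6, 7, 8$ and any group admitting too few or too many subgroups — is the main obstacle. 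Since $\frac{R}{Z}$ cannot be cyclic (the fact cited from \cite{dmachale}), orders like $5$ and $7$ are immediately excluded, and I would treat $6$ and $8$ by showing the number of index-$3$ (or appropriate) additive subgroups serving as centralizers cannot equal exactly four.

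Once $|R : Z| = 9$ and each proper centralizer has index $3$ are established, I would finish by identifying $\frac{R}{Z}$ among groups of order $9$. There are only two abelian groups of order $9$, namely $\Z_9$ and $\Z_3 \times \Z_3$; the cyclic one is excluded because $R$ is non-commutative, leaving $\frac{R}{Z(R)} \cong \Z_3 \times \Z_3$. I would also double-check consistency: in $\Z_3 \times \Z_3$ there are exactly four proper nontrivial subgroups, matching the four proper centralizers and the structure forced by Lemma \ref{lemma5C1}, so the count $|\Cent(R)| = 5$ is recovered and the characterization is complete. The delicate part throughout is converting the centralizer intersection data into tight numerical bounds, so I would lean heavily on Lemma \ref{lemma5C2}(b) and the fact that $\frac{R}{Z(R)}$ is non-cyclic to squeeze out every index other than $9$.
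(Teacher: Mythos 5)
Your framework for the converse is sound and uses the same ingredients as the paper — Theorem \ref{dc} for the easy direction, and Lemma \ref{lemma5C2} together with the fact (from its proof) that distinct proper centralizers meet exactly in $Z(R)$ — but the proposal stops precisely where the content of the theorem begins. Writing $|S| = c_S z$ with $z = |Z(R)|$, you assert that the identity $|R:Z(R)| = c_A + c_B + c_C + c_D - 3$ and the constraint $c_S c_T \leq |R:Z(R)|$ force "a narrow range of possibilities," but you never derive any upper bound on $|R:Z(R)|$ or enumerate the admissible tuples $(c_A,c_B,c_C,c_D)$; you only say you "expect" to eliminate $|R:Z(R)| = 6,7,8$. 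Moreover, the sketch of how you would do so suggests the cases were not actually checked: order $6$ requires no subgroup count at all (every abelian group of order $6$ is cyclic, so non-cyclicity of $R/Z(R)$ kills it outright), while for order $8$ there are no "index-$3$ subgroups" to count — the real obstruction is Lagrange's theorem applied to the $c_S$. As submitted, this is a plan whose decisive computation is missing, and that computation is the theorem.

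The gap is fillable inside your own parametrization, and doing so is arguably cleaner than the paper's argument. Order $c_A \geq c_B \geq c_C \geq c_D \geq 2$ and set $n = |R:Z(R)|$. Since $c_C, c_D \leq c_B$, the identity gives $n \leq c_A + 3c_B - 3$, and combining with $c_A c_B \leq n$ yields $(c_A - 3)(c_B - 1) \leq 0$, hence $c_A \leq 3$ and every $c_S \in \{2,3\}$, so $n = \sum c_S - 3 \in \{5,6,7,8,9\}$. Non-cyclicity of $R/Z(R)$ excludes $n = 5, 7$ (prime order) and $n = 6$ (abelian of order $6$ is cyclic); for $n = 8$, Lagrange forces each $c_S \in \{2\}$ since $3 \nmid 8$, giving $n = 5$, a contradiction; so $n = 9$ with all $c_S = 3$, and the non-cyclic abelian group of order $9$ is $\Z_3 \times \Z_3$. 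For comparison, the paper works with $|A|,|B|,|Z(R)|$ as fractions of $|R|$: it first pins $|A| \in \{|R|/2, |R|/3\}$ using Lemma \ref{lemma5C2}(a), then runs a longer case analysis on $|B|$ and $|Z(R)|$ with the same divisibility and non-cyclicity ingredients. Your route, once actually executed, is a genuine and tidier alternative — but you must execute it.
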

\begin{proof}
Let $\frac{R}{Z(R)} \cong  \Z_3 \times \Z_3$, then by Theorem \ref{dc}, we get $|\Cent(R)| = 5$.

Conversely, let $| \Cent(R)| = 5$. Let $A, B, C, D$ be the four proper centralizers of $R$. Then by Lemma \ref{lemma5C2} (b), $\frac{|A||B|}{|R|} \leq |Z(R)| \leq \frac{|R|}{6}$. Our aim is to get more near lower bound for $|Z(R)|$. We may assume without loss of generality that $|A| \geq |B| \geq |C| \geq |D|$. Suppose $|A| < \frac{|R|}{3}$, as $1 < |A| \leq \frac{|R|}{2}$. That is $|A| \leq \frac{|R|}{4}$. Now by Lemma \ref{lemma5C2} (a), $|R| \leq  |R| - 3|Z(R)|  < |R|$, a contradiction. Hence $|A| = \frac{|R|}{2}$ or $|A| = \frac{|R|}{3}$. If $|A| = \frac{|R|}{2}$, then $|R| = |A| + |B| + |C| + |D| - 3|Z(R)|$ gives $\frac{|R|}{2}  < |B| + |C| + |D|$ and so $\frac{|R|}{6}  < |B|$.
Also, applying Lemma \ref{lemma5C2} (b) on $A$ and $B$ we have $\frac{|R|}{6} < |B| \leq \frac{|R|}{3}$. So $|B|$ is one of $\frac{|R|}{3}, \frac{|R|}{4}$ or $\frac{|R|}{5}$. Reapplying Lemma \ref{lemma5C2} (b)  on $A$ and $B$ we have,
\[
\frac{|A||B|}{|R|}  \leq |Z(R)| \leq \frac{|R|}{6}
\] 
which gives $\frac{|R|}{10} \leq |Z(R)| \leq \frac{|R|}{6}$. 
Thus $|Z(R)|$ is one of $\frac{|R|}{6}, \frac{|R|}{7}, \frac{|R|}{8}, \frac{|R|}{9}$ or $\frac{|R|}{10}$. Let $|Z(R)| = \frac{|R|}{7}, \frac{|R|}{9}$ then $2$ divides $7$ and $9$, which is not possible. If $|Z(R)| = \frac{|R|}{6}$ then  $\frac{R}{Z(R)} \cong \Z_6$, a contradiction. Let $|Z(R)| = \frac{|R|}{8}$ then $\frac{|R|}{8}$ divides $|B|$. If $|B| = \frac{|R|}{3}, \frac{|R|}{5}$ then $3, 5$ divides $8$, a contradiction. Therefore $|B| = \frac{|R|}{4}$. By Lemma \ref{lemma5C2} (a), we have $\frac{5|R|}{8} = |C| + |D|$. Also $|B| \geq |C| \geq |D|$. So $|C| + |D| \leq \frac{|R|}{2} < \frac{5|R|}{8} = |C| + |D|$, a contradiction. If $|Z(R)| = \frac{|R|}{10}$, then $\frac{|R|}{10}$ divides $|B|$. If $|B| = \frac{|R|}{3}, \frac{|R|}{4}$ then $3, 4$ divides $10$, a contradiction. Therefore $|B| = \frac{|R|}{5}$. Now Lemma \ref{lemma5C2}(a) gives, $|C| + |D| = \frac{6|R|}{10}$. Also $|B| \geq |C| \geq |D|$, therefore $|C| + |D| \leq \frac{2|R|}{5} < \frac{6|R|}{10} = |C| + |D|$, a contradiction. 

If $|A| = \frac{|R|}{3}$ then  Lemma \ref{lemma5C2} (a) gives, $\frac{2|R|}{3} < |B| + |C| + |D|$ which gives $\frac{2|R|}{3}  < 3|B|$ and so $|B| \geq \frac{|R|}{4}$.
Also $|A| \geq |B|$, so $|B| = \frac{|R|}{3}$ or $\frac{|R|}{4}$. Again, applying Lemma \ref{lemma5C2} (b) on $A$ and $B$ we get,
\[
\frac{|A||B|}{|R|} \leq |Z(R)|  \leq \frac{|R|}{6} 
\]
which gives
$\frac{|R|}{12} \leq |Z(R)| \leq \frac{|R|}{6}$. Therefore $|Z(R)|$ is one of $\frac{|R|}{6}, \frac{|R|}{7}, \frac{|R|}{8}, \frac{|R|}{9}, \frac{|R|}{10}$, $\frac{|R|}{11}$ or $\frac{|R|}{12}$. Now if $|Z(R)| \; = \; \frac{|R|}{7}, \frac{|R|}{8}, \frac{|R|}{10}, \frac{|R|}{11}$ then $3$ divides $7, 8, 10, 11$, a contradiction. Let $|Z(R)| = \frac{|R|}{6}$ then as above we get a contradiction. Let $|Z(R)| = \frac{|R|}{9}$ then $\frac{R}{Z(R)} \cong \Z_3 \times \Z_3$. Let $|Z(R)| = \frac{|R|}{12}$ and $|B| = \frac{|R|}{3}$ then applying Lemma \ref{lemma5C2} (b) on $A$ and $B$ we have, $\frac{|R|}{9} \leq \frac{|R|}{12}$, a contradiction. If $|B| = \frac{|R|}{4}$ then Lemma \ref{lemma5C2} (a) gives, $|C| + |D| = \frac{4|R|}{6}$.
 Also $|C|, |D| \leq \frac{|R|}{4}$, so $|C| + |D| \leq \frac{3|R|}{6} < \frac{4|R|}{6} = |C| + |D|$, which is not possible. Hence $\frac{R}{Z(R)} \cong \Z_3 \times \Z_3$.
\end{proof}



\section{Relation between $|\Cent(R)|$ and $d(R)$}
Note that $d(R) = 1$ if and only if $R$ is commutative. Therefore, by Proposition \ref{prop1}, we have  $|\Cent(R)| = 1$ if and only if $d(R) = 1$. By Theorem \ref{4c} and Theorem $1$ of \cite{dmachale}, we have
the following result.

\begin{prop}
Let $R$ be a non-commutative finite ring. Then $|\Cent(R)| = 4$ if and only if $d(R) = \frac{5}{8}$.
\end{prop}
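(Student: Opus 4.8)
The plan is to prove both implications by routing through the common intermediate condition $\frac{R}{Z(R)} \cong \Z_2 \times \Z_2$, which is already the characterization of $4$-centralizer rings supplied by Theorem \ref{4c}. Consequently the entire statement reduces to establishing that, for a non-commutative finite ring $R$, one has $d(R) = \frac{5}{8}$ if and only if $\frac{R}{Z(R)} \cong \Z_2 \times \Z_2$; chaining this equivalence with Theorem \ref{4c} then yields the proposition.

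For the forward direction I would assume $|\Cent(R)| = 4$, so that Theorem \ref{4c} gives $\frac{R}{Z(R)} \cong \Z_2 \times \Z_2$. The proof of that theorem further shows that the three proper centralizers $A, B, C$ each have order $\frac{|R|}{2}$ while $|Z(R)| = \frac{|R|}{4}$. First I would rewrite $d(R) = \frac{1}{|R|^2}\sum_{r \in R}|C(r)|$ by grouping elements according to which centralizer they realize: each of the $|Z(R)|$ central elements contributes $|C(r)| = |R|$, and each of the $|A| - |Z(R)| = \frac{|R|}{4}$ non-central elements of $A$ (and likewise of $B$ and of $C$) contributes $|C(r)| = \frac{|R|}{2}$. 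Summing gives
\[
\sum_{r \in R}|C(r)| = \frac{|R|}{4}\cdot|R| + 3\cdot\frac{|R|}{4}\cdot\frac{|R|}{2} = \frac{5|R|^2}{8},
\]
whence $d(R) = \frac{5}{8}$.

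For the converse I would assume $d(R) = \frac{5}{8}$ and invoke MacHale's result (Theorem $1$ of \cite{dmachale}), the ring analogue of the classical ``$\frac{5}{8}$ theorem'': for a non-commutative finite ring the bound $d(R) \leq \frac{5}{8}$ holds, with equality precisely when $\frac{R}{Z(R)} \cong \Z_2 \times \Z_2$. Applying the equality case to $d(R) = \frac{5}{8}$ yields $\frac{R}{Z(R)} \cong \Z_2 \times \Z_2$, and then Theorem \ref{4c} returns $|\Cent(R)| = 4$.

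The main obstacle is the converse, which genuinely rests on MacHale's equality characterization of the $\frac{5}{8}$ bound rather than on anything proved in this excerpt; the forward direction, by contrast, is a direct counting argument driven entirely by the centralizer data extracted from the proof of Theorem \ref{4c}. Were one to insist on a self-contained argument, the delicate step would be reproving that $d(R) = \frac{5}{8}$ forces $|R : Z(R)| = 4$ with $\frac{R}{Z(R)}$ non-cyclic, which is exactly the content being imported from \cite{dmachale}.
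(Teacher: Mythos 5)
Your proposal is correct and follows essentially the same route as the paper: the paper's proof is a one-line citation chaining Theorem \ref{4c} with Theorem 1 of \cite{dmachale} (the ring-theoretic $\frac{5}{8}$ theorem with equality exactly when $|R:Z(R)|=4$, which for a non-commutative ring means $\frac{R}{Z(R)} \cong \Z_2 \times \Z_2$), exactly as you do for the converse. The only difference is in the forward direction, where instead of also invoking the equality half of MacHale's theorem you reprove it by direct counting from the centralizer data extracted from the proof of Theorem \ref{4c} ($|A|=|B|=|C|=\frac{|R|}{2}$, pairwise intersections equal to $Z(R)$, $|Z(R)|=\frac{|R|}{4}$); that count, $\frac{|R|}{4}\cdot|R| + 3\cdot\frac{|R|}{4}\cdot\frac{|R|}{2} = \frac{5|R|^2}{8}$, is valid and makes this direction self-contained, whereas the paper simply imports it from \cite{dmachale}.
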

In \cite{dmachale}, MacHale also proved the following theorem:
\begin{thm}\label{dm}
Let $R$ be a non-commutative finite ring and $p$ the smallest prime dividing the  order of $R$. Then $d(R) \leq \frac{1}{p^3}(p^2 + p - 1),$ with equality  if and only if $|R : Z(R)| = p^2$. 
\end{thm}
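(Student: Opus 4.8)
The plan is to estimate the sum $\sum_{r \in R}|C(r)|$ by splitting it according to whether $r$ lies in $Z(R)$ or not, and then to optimize the resulting bound. First I would note that for each $r$ the map $s \mapsto rs - sr$ is an additive homomorphism of $(R,+)$ whose kernel is $C(r)$; hence every $C(r)$ is an additive subgroup of $R$. For $r \in Z(R)$ we have $C(r) = R$, and there are exactly $|Z(R)|$ such terms. For $r \in R - Z(R)$, $C(r)$ is a \emph{proper} additive subgroup, so its index $|R : C(r)|$ is a divisor of $|R|$ exceeding $1$ and therefore at least $p$, the smallest prime dividing $|R|$; consequently $|C(r)| \leq \frac{|R|}{p}$.

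Writing $Z = Z(R)$ and using these observations,
\[
d(R) = \frac{1}{|R|^2}\left(|Z|\,|R| + \sum_{r \in R - Z}|C(r)|\right) \leq \frac{|Z|\,|R| + (|R| - |Z|)\frac{|R|}{p}}{|R|^2}.
\]
Viewed as a function of $|Z|$, the right-hand side is increasing (its coefficient of $|Z|$ is $\frac{p-1}{p|R|} > 0$), so to finish I would bound $|Z|$ from above. Since $R$ is non-commutative, the additive group $R/Z$ is not cyclic by Lemma $1$ of \cite{dmachale}; as $R/Z$ is a finite abelian group, non-cyclicity forces a subgroup isomorphic to $\Z_q \times \Z_q$ for some prime $q \mid |R|$, whence $|R : Z| \geq q^2 \geq p^2$, i.e.\ $|Z| \leq \frac{|R|}{p^2}$. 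Substituting $|Z| = \frac{|R|}{p^2}$ into the monotone bound yields, after simplification, $d(R) \leq \frac{1}{p^2} + \frac{1}{p} - \frac{1}{p^3} = \frac{1}{p^3}(p^2 + p - 1)$.

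For the equality statement, the forward direction is the one requiring care. If $|R : Z| = p^2$, then for every $r \in R - Z$ we have $Z \subsetneq C(r) \subsetneq R$, and since $|R : C(r)|\cdot|C(r) : Z| = p^2$ with both factors strictly larger than $1$, each factor equals $p$; hence $|C(r)| = \frac{|R|}{p}$ for all non-central $r$, which makes the first displayed inequality an equality and gives $d(R) = \frac{1}{p^3}(p^2 + p - 1)$. Conversely, if equality holds in the final bound, then the monotone estimate must be sharp, forcing $|Z| = \frac{|R|}{p^2}$, that is $|R : Z| = p^2$. The main obstacle I anticipate is purely bookkeeping: treating the two inequalities (the centralizer-index bound and $|Z| \leq \frac{|R|}{p^2}$) as a coupled optimization so that the equality case collapses to the single condition $|R : Z| = p^2$, noting in particular that this condition \emph{forces} all non-central centralizers to have index $p$ rather than imposing it as a separate hypothesis. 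The ring structure itself plays almost no role beyond the fact that each $C(r)$ is an additive subgroup.
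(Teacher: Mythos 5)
Your proposal is correct: the centralizer-counting decomposition, the index bound $|C(r)| \leq \frac{|R|}{p}$ for non-central $r$, the bound $|R : Z(R)| \geq p^2$ from non-cyclicity of the additive group $R/Z(R)$, and both directions of the equality analysis (including the observation that $|R : Z(R)| = p^2$ forces every non-central centralizer to have index exactly $p$) all hold. Note, however, that the paper itself gives no proof of this statement --- it is quoted as Theorem \ref{dm} from MacHale \cite{dmachale} --- so there is no internal proof to compare against; your argument is essentially the standard one (and in substance MacHale's original), so nothing further is needed.
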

\noindent Now by Theorem \ref{dc} and Theorem \ref{dm}, we have the following interesting connection between $d(R)$ and $|\Cent(R)|$.
\begin{prop}\label{rc}
Let $R$ be a non-commutative finite ring and $p$ the smallest prime dividing the  order of $R$. If $d(R) = \frac{1}{p^3}(p^2 + p - 1)$ then  $|\Cent(R)| = p + 2$. 
\end{prop}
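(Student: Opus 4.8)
The plan is to read off Proposition \ref{rc} almost immediately by combining the two results that precede it, so the proof is really an exercise in chaining equivalences and implications rather than in constructing anything new. The statement to prove is: if $R$ is a non-commutative finite ring with smallest prime divisor $p$ of $|R|$, and $d(R) = \frac{1}{p^3}(p^2 + p - 1)$, then $|\Cent(R)| = p + 2$.

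First I would invoke Theorem \ref{dm}. Since $R$ is non-commutative and $p$ is the smallest prime dividing $|R|$, the inequality $d(R) \leq \frac{1}{p^3}(p^2 + p - 1)$ holds, and the equality clause tells us precisely that $d(R) = \frac{1}{p^3}(p^2 + p - 1)$ if and only if $|R : Z(R)| = p^2$. So the hypothesis on $d(R)$ converts directly into the index condition $|R : Z(R)| = p^2$.

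Next I would translate the index condition into a statement about the quotient group $\frac{R}{Z(R)}$. Because $|R : Z(R)| = p^2$, the additive group $\frac{R}{Z(R)}$ has order $p^2$, so it is isomorphic either to $\Z_{p^2}$ or to $\Z_p \times \Z_p$. Here is where I would use the fact recalled in the introduction, namely \cite[Lemma 1]{dmachale}: for a non-commutative ring the additive group $\frac{R}{Z(R)}$ is never cyclic. This rules out $\Z_{p^2}$ and forces $\frac{R}{Z(R)} \cong \Z_p \times \Z_p$. Finally, applying Theorem \ref{dc} to this isomorphism gives $|\Cent(R)| = p + 2$, completing the argument.

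I do not anticipate a genuine obstacle, since every ingredient is already available: the only point requiring a moment's care is the elimination of the cyclic quotient $\Z_{p^2}$, which is exactly what the non-cyclicity lemma is for. One should state explicitly that $R$ being non-commutative is what licenses both that lemma and the equality case of Theorem \ref{dm}, so no hypothesis is wasted. Thus the whole proof amounts to: equality in Theorem \ref{dm} $\Rightarrow |R : Z(R)| = p^2 \Rightarrow \frac{R}{Z(R)} \cong \Z_p \times \Z_p \Rightarrow |\Cent(R)| = p + 2$ by Theorem \ref{dc}.
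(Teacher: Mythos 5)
Your proposal is correct and follows exactly the route the paper intends: the paper derives Proposition \ref{rc} directly "by Theorem \ref{dc} and Theorem \ref{dm}" without writing out details, and your chain (equality in Theorem \ref{dm} $\Rightarrow |R : Z(R)| = p^2$, then non-cyclicity of $\frac{R}{Z(R)}$ rules out $\Z_{p^2}$, then Theorem \ref{dc}) is precisely the omitted argument. Your explicit handling of the cyclic case is a welcome bit of care that the paper leaves implicit.
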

We conclude the paper by noting that the converse of  Proposition \ref{rc} holds for some finite non-commutative rings. In particular, by Theorem \ref{pring} and Theorem \ref{dm}, we have the following result.
\begin{prop}
 Let $R$ be a non-commutative ring whose order is a power of a prime $p$. If  $|\Cent(R)| = p + 2$ then $d(R) = \frac{1}{p^3}(p^2 + p - 1)$. 
\end{prop}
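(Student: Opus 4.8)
The plan is to chain together the two results cited just before the statement, namely Theorem \ref{pring} and Theorem \ref{dm}. Since $|R|$ is a power of the prime $p$, the number $p$ is in fact the \emph{only} prime dividing $|R|$, and in particular it is the smallest such prime. This observation is what allows Theorem \ref{dm} to be invoked with precisely the prime $p$ appearing in the hypothesis; I would record it explicitly at the outset, because both cited theorems must be applied with a compatible notion of $p$.

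With that in place, the argument proceeds in two steps. First I would use the equality case of Theorem \ref{pring}: the hypothesis $|\Cent(R)| = p + 2$ is exactly the equality in that theorem, so it forces $\frac{R}{Z(R)} \cong \Z_p \times \Z_p$. From this isomorphism I read off the index $|R : Z(R)| = p^2$. Second, I would feed this index into the equality condition of Theorem \ref{dm}, which asserts that $d(R) \leq \frac{1}{p^3}(p^2 + p - 1)$ with equality if and only if $|R : Z(R)| = p^2$. Since we have just established $|R : Z(R)| = p^2$, the equality case applies and gives $d(R) = \frac{1}{p^3}(p^2 + p - 1)$, as required.

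There is essentially no hard step here: the entire content has been front-loaded into the two auxiliary theorems, and the proposition is their direct synthesis. The only point requiring a moment's care is the compatibility of the prime $p$ across the two theorems, i.e. confirming that the $p$ with $|\Cent(R)| = p+2$ really is the smallest prime dividing $|R|$; but this is immediate from $|R|$ being a prime power. Accordingly I expect the proof to be a short two-line deduction rather than a computation.
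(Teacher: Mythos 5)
Your proposal is correct and is exactly the paper's argument: the paper derives this proposition directly from Theorem \ref{pring} (equality case giving $\frac{R}{Z(R)} \cong \Z_p \times \Z_p$, hence $|R:Z(R)| = p^2$) combined with the equality case of Theorem \ref{dm}. Your added remark that $p$ is automatically the smallest prime dividing $|R|$ is a sensible point of care that the paper leaves implicit.
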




\end{document}